\newcommand{\norm}[1]{\lVert#1\rVert}
\newcommand{\abs}[1]{\lvert#1\rvert} 
\newcommand{\inner}[1]{\langle#1\rangle} 
\newcommand{\mta}{{\mathcal{M}_a}}
\newcommand{\redel}{\mathop{\textup{Re}}}
\newcommand{\imdel}{\mathop{\textup{Im}}}
\newcommand{\mspan}{\mathop{\textup{span}}}
\newcommand{\dist}{\mathop{\textup{dist}}}
\newcommand{\essinf}{\mathop{\textup{ess\,inf}}}
\newcommand{\ident}{\mathop{\textup{Id}}}
\newcommand{\nd}{\mathcal{R}}
\newcommand{\rum}[1]{\mathbb{#1}}
\newcommand{\Sref}[1]{Section~\ref{#1}}
\newcommand{\sang}{\zeta}
\newtheorem{remark}{Remark}
\title{Distinguishability revisited: depth dependent bounds on reconstruction quality in electrical impedance tomography\thanks{This research is supported by Advanced Grant 291405 HD-Tomo from the European Research Council.}}
\author{Henrik Garde\footnotemark[2] \footnotemark[3] \and Kim Knudsen\footnotemark[2]}
\begin{document}
\maketitle

\renewcommand{\thefootnote}{\fnsymbol{footnote}}
\footnotetext[2]{Department of Applied Mathematics and Computer Science, Technical University of Denmark, 2800 Kgs. Lyngby, Denmark.}
\footnotetext[3]{Department of Mathematical Sciences, Aalborg University, 9100 Aalborg, Denmark.}
\renewcommand{\thefootnote}{\arabic{footnote}}

\slugger{siap}{xxxx}{xx}{x}{x--x}

\begin{abstract}
The reconstruction problem in electrical impedance tomography is highly ill-posed, and it is often observed numerically that reconstructions have poor resolution far away from the measurement boundary but better resolution near the measurement boundary. The observation can be quantified by the concept of distinguishability of inclusions. This paper provides mathematically rigorous results supporting the intuition. Indeed, for a model problem lower and upper bounds on the distinguishability of an inclusion are derived in terms of the boundary data. These bounds depend explicitly on the distance of the inclusion to the boundary, i.e.\ the depth of the inclusion. The results are obtained for disk inclusions in a homogeneous background in the unit disk. The theoretical bounds are verified numerically using a novel, exact characterization of the forward map as a tridiagonal matrix. 
\end{abstract}

\begin{keywords}electrical impedance tomography, depth dependence, harmonic morphism, eigenvalue bounds, distinguishability\end{keywords}

\begin{AMS}35P15, 35R30, 35R05 \end{AMS}

\pagestyle{myheadings}
\thispagestyle{plain}
\markboth{H. GARDE AND K. KNUDSEN}{BOUNDS ON RECONSTRUCTION QUALITY IN EIT}

\section{Introduction}

The goal of electrical impedance tomography (EIT) is to reconstruct
the internal electrical conductivity of an object. This is done from
voltage and current boundary measurements through electrodes on the object's
surface. Applications of EIT include, among others, monitoring patient lung function, geophysics, and industrial tomography for instance for non-destructive imaging of cracks in concrete \cite{Holder2005,Abubakar2009,York2001,Cheney1999,Hanke2003,Uhlmann2009,Karhunen2010,Karhunen2010a}. For a given EIT device with fixed precision and measurements corrupted by noise it is of course important to have a basic understanding of the quality and reliability of reconstructed conductivities. There seems to be a well-established intuition that details further away from the measurement boundary are more difficult to reconstruct reliably than details closer to the boundary, i.e.\ the resolution in reconstructions is depth dependent.

The inverse problem in EIT is highly ill-posed, and under reasonable assumptions it is possible to obtain conditional log-type stability estimates \cite{Alessandrini1988,Mandache2001}. It is worth noting that these estimates are uniform throughout the domain and therefore do not take into account the distance to the boundary. In spite of the global estimates, reconstruction algorithms often produce good results close to the boundary (e.g.\ \cite{Garde_2015,Knudsen_2015,sparse3d,winkler2014}). For a specific example see Figure \ref{fig:fig1} where an inclusion is more accurately reconstructed close to the measurement boundary. No theoretical results seem to address this depth dependence in general; for the linearized problem, however, a few results exist \cite{Nagayasu_2009,Ammari_2013}. The main results presented in this paper will for the first time provide theoretical evidence for the non-linear problem.

\begin{figure}[htb]
	\centering
	\includegraphics[width = .45\textwidth]{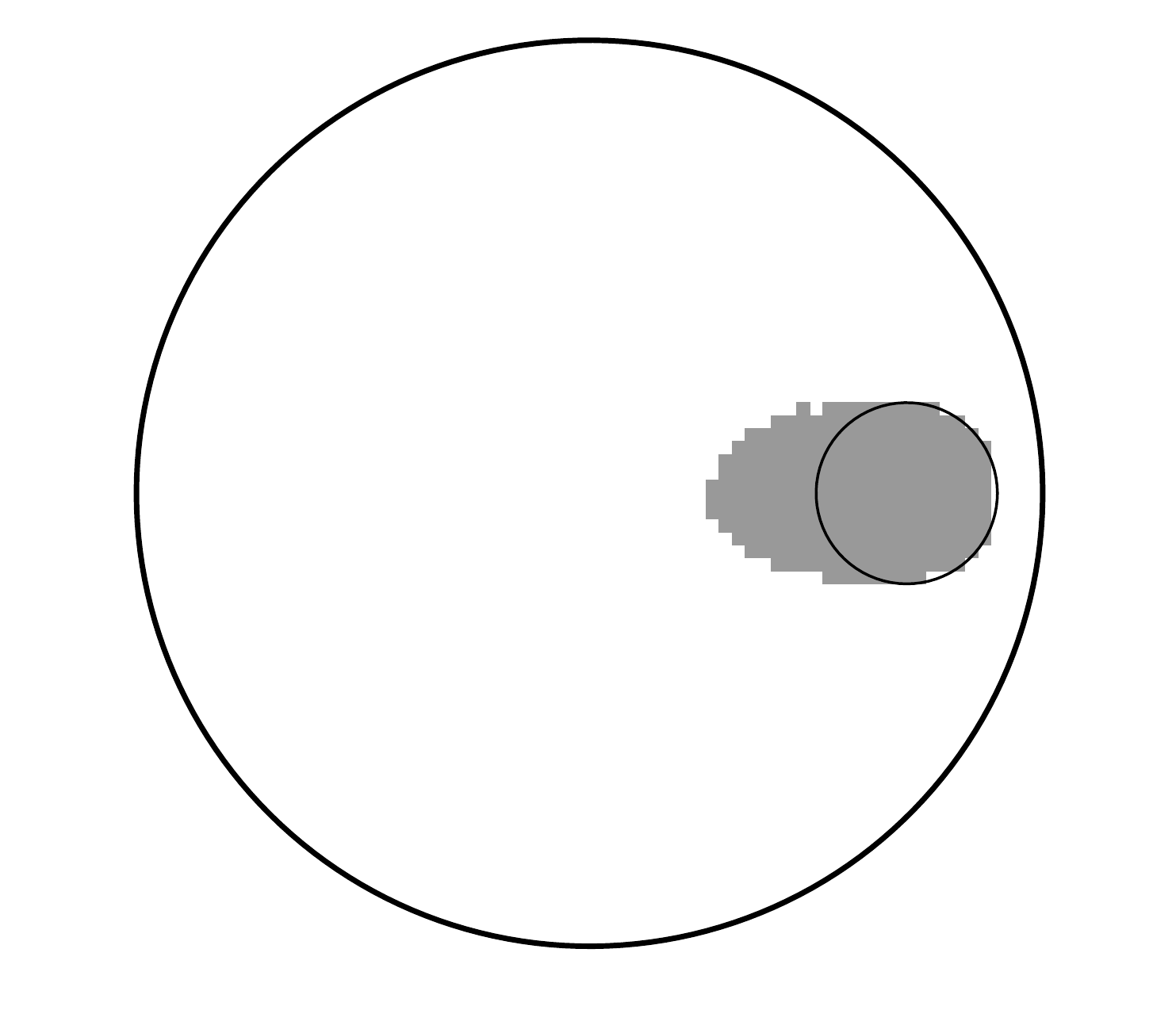}
	\caption{Reconstruction in the unit disk of a ball inclusion $B$ (black outline) with center $(0.7,0)$ and radius $0.2$, by use of the monotonicity method (cf.\ \cite{Harrach13,GardeStaboulis_2016}). The simulated data is based on the conductivity $\gamma = 1 + 4\chi_{B}$ and 32 trigonometric current patterns. Noise is added corresponding to a 0.5\% noise level.} \label{fig:fig1}
\end{figure}

Given the real-valued conductivity
\begin{equation*}
	\gamma\in L^\infty_+(\Omega) \equiv \{w\in L^\infty(\Omega) : \essinf w > 0\}
\end{equation*}
the forward problem of EIT is governed by the \emph{conductivity equation}%
\begin{equation}
	\nabla \cdot(\gamma\nabla u) = 0, \text{ in } \Omega, \label{eq:condeq}
\end{equation} 
where $u$ models the interior electric potential and $\Omega\subset \rum{R}^d$ is a bounded Lipschitz domain for $d \geq 2$ modelling the physical object. Depending on the choice of boundary conditions various models for EIT arise. The simplest model is Calder\'on's original formulation of the \emph{continuum model} \cite{Calder'on1980} that given a boundary potential $f\in H^{1/2}(\partial\Omega)$ makes use of the Dirichlet boundary condition
\begin{equation*}
	u|_{\partial\Omega} = f \text{ on } \partial\Omega,
\end{equation*}
where $u|_{\partial\Omega}$ denotes the trace of $u$ to the boundary $\partial \Omega.$ Standard elliptic theory for the continuum model gives a unique solution $u\in H^1(\Omega)$, and the resulting  boundary current flux is then given by $\nu\cdot\gamma\nabla u|_{\partial\Omega}$ with $\nu$ denoting the outward unit normal to $\partial\Omega.$ All possible boundary measurements are encoded in the Dirichlet-to-Neumann (DN) map defined by 
\begin{align*}
  \Lambda(\gamma)\colon H^{1/2}(\partial\Omega)&\to H^{-1/2}(\partial\Omega),\\
u|_{\partial\Omega} &\mapsto \nu\cdot\gamma\nabla u,
\end{align*}
and the inverse problem of EIT is thus to reconstruct $\gamma$ given $\Lambda(\gamma).$  Uniqueness for the inverse problem with the continuum model is a well-studied topic \cite{Sylvester1987, Nachman1996, Haberman_2013, CaroRogers2016}; we focus on 2D where there is uniqueness for general conductivities in $L^\infty_+(\Omega)$ if the domain is simply connected \cite{Astala2006a}.

In this paper we consider the domain $\Omega$ to be the unit disk
$\rum{D}\equiv \{x\in\rum{R}^2 : \abs{x}< 1 \}$ with conductivities
defined by a uniform background with one circular inclusion. This is
certainly a simplification in comparison to real measurement
scenarios, but the ideal model allows an explicit understanding of the
depth dependence that may shed light upon more complex situations. Let $A>-1$ and let $\chi_{B_{C,R}}$ be the characteristic function on
the open ball $B_{C,R}\subset \rum{D}$ with centre $C$ and radius
$R,$ and define the model conductivity $\gamma = 1+A\chi_{B_{C,R}}.$ Suppose we have a DN map contaminated by noise, i.e.\ $\Lambda^\delta \equiv \Lambda(1+A\chi_{B_{C,R}}) + E^\delta$ with a noise level $\norm{E^\delta}_{\mathcal{L}(L^2(\partial\rum{D}))} = \delta$. To ensure that $\Lambda^\delta$ contains information about the inclusion 
we need 
\begin{equation}
	\norm{\Lambda(1+A\chi_{B_{C,R}})-\Lambda(1)}_{\mathcal{L}(L^2(\partial\rum{D}))} \label{eq:distin}
\end{equation}
to be larger than $\delta,$ and hence we call \eqref{eq:distin} the \emph{distinguishability} of the inclusion $B_{C,R}$ with contrast $A$ to the background. $\mathcal{L}(L^2(\partial\rum{D}))$ in \eqref{eq:distin} denotes the space of bounded linear operators from $L^2(\partial\rum{D})$ to itself. The difference operator $\Lambda(1+A\chi_{B_{C,R}})-\Lambda(1)$ is compact and self-adjoint in $L^2(\partial\rum{D})$ (cf.\ Lemma \ref{lemma:DNL2bnd}), so the norm in \eqref{eq:distin} equals the largest magnitude eigenvalue of $\Lambda({1+A\chi_{B_{C,R}}})-\Lambda(1)$. 

In \cite{Isaacson1986,Cheney1992} the norm
\begin{equation}
  \label{eq:distin2}
	\norm{\nd(1 + A\chi_{B_{0,r}})-\nd(1))}_{\mathcal{L}(L_\diamond^2(\partial\rum{D}))}  
\end{equation}
was used to define distinguishability. Here $\mathcal{R}(\gamma)$ denotes the Neumann-to-Dirichlet (ND) map (the inverse of $\Lambda(\gamma)$) and $B_{0,r}$ a concentric ball with radius $r$. The characterization of \eqref{eq:distin2} is straightforward, as the eigenvalues of the operator $\nd(1 + A\chi_{B_{0,r}})-\nd(1)$ can be found explicitly by separation of variables. 

In contrast to \cite{Isaacson1986,Cheney1992} we use non-concentric balls $B_{C,R}.$ As a consequence we do not get a full characterization of \eqref{eq:distin} but rather explicit lower and upper bounds (Theorem~\ref{dnbounds}), which depend on the distance of $B_{C,R}$ to the boundary, i.e.\ the depth of the inclusion. The bounds show that the distinguishability is decreasing with the depth of the inclusion, and that the distinguishability can be arbitrarily high when the inclusion is sufficiently close to the boundary. Furthermore, the depth dependence can be formulated for inclusions of fixed size but varying distance to the boundary (cf.\ Corollary \ref{coro:fixedsize}).

The spectrum of $\Lambda(1+A\chi_{B_{C,R}})-\Lambda(1)$ does in general not have a known explicit characterization, but in case of a non-concentric inclusion it can be related to the known spectrum of a concentric inclusion  by the use of M\"obius transformations. These transformations belong to a class of harmonic morphisms that is used widely in EIT for instance in reconstruction \cite{Hanke_2008,Ikehata2000a,Kress_2011,Kress_2012,Akduman_2002,Saka_2004}, and recently for generating spatially varying meshes trying to accommodate for the depth dependence in numerical reconstruction when using electrode models \cite{winkler2014}. 

Before describing the general structure of the paper, we give a few comments on the simplifications used to obtain the distinguishability bounds, and the possible application of the bounds to real measurement scenarios. The unit disk domain is a natural choice of domain both in terms of depth dependence, as it is rotationally symmetric, but also in terms of the Riemann mapping theorem (e.g.\ \cite{Stein2003}) which states that simply connected domains in $\rum{C}$ are conformally equivalent to the unit disk. If we consider an open set $\mathcal{D}$ as the inclusion, we may pick open balls $B_1$ and $B_2$ such that $B_1\subseteq \mathcal{D}\subseteq B_2$. The distinguishability of $\mathcal{D}$ can then be related to the presented results in this paper using the monotonicity relations outlined in Appendix \ref{sec:appA}. For practical measurements there are also other forward models for EIT that can reduce modelling errors, such as the \emph{complete electrode model} (CEM) \cite{Somersalo1992}. However, in \cite{Hyvonen09,GardeStaboulis_2016} it was proved that the difference in the forward map of CEM and the continuum model, as well as their Fr\'echet derivatives, depends linearly on a parameter that characterizes how densely the electrodes cover the boundary. It is therefore expected that, for sufficiently many equidistant boundary electrodes, any depth dependent properties of the continuum model will also be observed for the CEM.

In the rest of the paper $(x_1,x_2)\in\rum{R}^2$ will be identified with $x_1+ix_2\in\rum{C}$. Furthermore, $\norm{\cdot}$ will denote the $L^2(\partial\rum{D})$-norm and $\inner{\cdot,\cdot}$ the corresponding inner product.

The paper is organised as follows: in Section~\ref{sec:mobdn} we introduce M\"obius transformations in the unit disk, and the DN map for non-concentric inclusions is given in terms of these transformations. The distinguishability bounds are derived in Theorem~\ref{dnbounds} in Section~\ref{sec:dnbnds}. Section~\ref{sec:bnds} gives an exact tridiagonal matrix representation of the non-concentric DN maps to accurately and efficiently validate the bounds numerically and demonstrate their tightness. Finally, we conclude in Section~\ref{sec:conc}.

In Appendix \ref{sec:appB} results regarding bounds on distinguishability and exact matrix characterization for the Neumann-to-Dirichlet (ND) map are given. While the actual bounds for the ND map are fundamentally different from the DN counterparts, they are placed in the appendix due to the nature of the proofs being very similar to the proofs for the DN map. Furthermore, in particular the lower bound for the ND map is not as sharp as for the DN map.

\section{M\"{o}bius transformation of the Dirichlet-to-Neumann map} \label{sec:mobdn}

In this section we will relate the DN map of a non-concentric ball inclusion to a DN map for a concentric ball inclusion by the use of M\"obius transformations. This relation will in Section~\ref{sec:dnbnds} be used to obtain bounds on the distinguishability. 

\subsection{M\"obius transformations in the unit disk}

M\"obius transformations are known to preserve harmonic functions in 2D, which makes them \emph{harmonic mor\-phisms}. On the unit disk $\rum{D}$ the harmonic morphisms are uniquely (up to rotation) given by
\begin{equation}
	M_a(x) = \frac{x-a}{\overline{a}x-1},\enskip x\in\rum{D}, \label{mobtrans}
\end{equation}
for $\abs{a}<1$ \cite{Stein2003}. The transformations in \eqref{mobtrans} are special cases of M\"{o}bius transformations, where $M_a : \rum{D}\to\rum{D}$ and $\partial\rum{D}\to\partial\rum{D}$. The particular choice of rotation in \eqref{mobtrans} implies that $M_a$ is an involution, i.e.\ $M_a^{-1} = M_a$. Furthermore, for any ball $B_{C,R}\subset \rum{D}$ with centre $C$ and radius $R<1-\abs{C}$ there exists a unique $a\in\rum{D}$ such that $M_a(B_{C,R}) = B_{0,r}$ for some $r\geq R$.

Let $a \equiv \rho e^{i\sang}$ with $0\leq \rho <1$ and $\sang\in\rum{R}$. Then we can straightforwardly relate the M\"{o}bius transformation anywhere in the disk, $M_a$, to the M\"{o}bius transformation along the real line, $M_\rho$, by the following rotations
\begin{equation}
	M_{\rho e^{i\sang}}(x) = e^{i\sang}M_\rho(e^{-i\sang}x). \label{eq:mobreal}
\end{equation}
This is a useful property that often reduces proofs including $M_a$ to the simpler form $M_\rho$. 

The characterization below of how $M_a$ can be used to move ball inclusions in $\rum{D}$ while preserving harmonic functions is well-known (cf.\ \cite{Hanke_2008,winkler2014}). The proof is short and given for completeness for the particular choice of transformation in \eqref{mobtrans}. 
\begin{proposition} \label{prop:mobdisk}
	\begin{romannum}
		\item Let $a \equiv \rho e^{i\sang}$ with $0\leq \rho < 1$ and $\sang\in\rum{R}$, and let $0<r<1$. Then $M_a$ maps $B_{0,r}$ to $B_{C,R}$ with
		\begin{equation*}
			C = \frac{\rho(r^2-1)}{\rho^2r^2-1}e^{i\sang}, \qquad
			R = \frac{r(\rho^2-1)}{\rho^2r^2-1}. 
		\end{equation*}
		\item Let $C \equiv ce^{i\sang}$ with $0\leq c<1$ and $\sang\in\rum{R}$, and let $0<R < 1-c$. Then the unique $a\in\rum{D}$ such that $M_a$ maps $B_{C,R}$ to a concentric ball $B_{0,r}$ satisfies
		\begin{equation}
			r = \frac{1+R^2-c^2-\sqrt{((1-R)^2-c^2)((1+R)^2-c^2)}}{2R},\qquad
			a = \frac{C}{1-Rr}. \label{eq:ra}
		\end{equation}
	\end{romannum}
\end{proposition}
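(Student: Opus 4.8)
The plan is to prove part (i) first by a direct computation and then deduce part (ii) from it using the involution property $M_a^{-1}=M_a$. For part (i) I would first use the rotation identity \eqref{eq:mobreal} to reduce to the case $\sang=0$: since $B_{0,r}$ is rotationally symmetric, $M_{\rho e^{i\sang}}(B_{0,r})=e^{i\sang}M_\rho(B_{0,r})$, so it suffices to compute $M_\rho(B_{0,r})$ for real $\rho$ and afterwards rotate the resulting center by $e^{i\sang}$ (the radius being rotation invariant). Because $M_\rho$ has real coefficients it commutes with complex conjugation, hence it maps the conjugation-symmetric circle $\partial B_{0,r}$ to a circle symmetric about the real axis; moreover the pole $1/\rho$ lies outside $\overline{B_{0,r}}$ (as $1/\rho>1>r$), so the image of the disk is a genuine bounded disk whose center lies on the real axis.

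Such an image circle is pinned down by the two real points $M_\rho(r)$ and $M_\rho(-r)$, which are the endpoints of a diameter: the center is their midpoint and $R$ is half the distance between them. Computing $M_\rho(\pm r)=\frac{\pm r-\rho}{\pm\rho r-1}$ and putting the two fractions over the common denominator $\rho^2r^2-1$, the cross terms cancel and the midpoint simplifies to $\rho(r^2-1)/(\rho^2r^2-1)$ while half the distance simplifies to $r(\rho^2-1)/(\rho^2r^2-1)$; rotating the center by $e^{i\sang}$ then gives exactly the claimed $C$ and $R$. This part is essentially bookkeeping; the only point requiring care is tracking signs so that $R>0$ (both numerator and denominator are negative since $\rho<1$ and $\rho r<1$).

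For part (ii) I would again reduce to $\sang=0$ via \eqref{eq:mobreal}, so that $C=c$ is real and the sought $a$ is real, say $a=\rho$; the general case follows by rotating back, which replaces $c$ by $C$ and $\rho$ by $\rho e^{i\sang}$. Since $M_a$ is an involution, requiring $M_a(B_{C,R})=B_{0,r}$ is the same as requiring $M_\rho(B_{0,r})=B_{c,R}$, so by part (i) the pair $(\rho,r)$ must satisfy $c=\rho(r^2-1)/(\rho^2r^2-1)$ and $R=r(\rho^2-1)/(\rho^2r^2-1)$. From the second equation a short computation gives $1-Rr=(r^2-1)/(\rho^2r^2-1)$; since the first equation carries the same factor $r^2-1$, dividing them yields $\rho=c/(1-Rr)$, i.e.\ $a=C/(1-Rr)$. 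Substituting $\rho=c/(1-Rr)$ back into the second equation, rewritten as $\rho^2=(r-R)/(r(1-Rr))$, eliminates $\rho$ and collapses to the quadratic $Rr^2-(1+R^2-c^2)r+R=0$.

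It then remains to solve this quadratic and select the correct root, which I expect to be the main obstacle, though only in the sense of careful bookkeeping. Its discriminant factors as $(1+R^2-c^2)^2-4R^2=((1-R)^2-c^2)((1+R)^2-c^2)$, which is strictly positive because $0<R<1-c$ forces $1-R>c\ge0$; the two roots have product $R/R=1$, so exactly one lies in $(0,1)$, namely the root with the minus sign, giving the stated formula for $r$. Finally I would check that the corresponding $\rho=c/(1-Rr)$ satisfies $0\le\rho<1$, so that $a\in\rum{D}$ and $M_a$ is well defined, which together with the forced argument $\sang$ yields uniqueness of $a$. The geometric content (images of circles are circles, preservation of the real axis) is standard; the real work is the algebraic elimination and the sign/root bookkeeping just described.
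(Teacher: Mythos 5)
Your proposal is correct and follows essentially the same route as the paper: reduce to $\sang=0$ via the rotation identity, obtain the image circle's centre and radius from the real points $M_\rho(\pm r)$ by symmetry about the real axis, and then solve the resulting system for $r$ and $\rho$ in part (ii). You merely spell out the algebraic elimination (the quadratic $Rr^2-(1+R^2-c^2)r+R=0$, its factored discriminant, and the root selection via the product of roots being $1$) that the paper compresses into "we solve \eqref{ceq} and \eqref{Req}", which is a welcome amount of extra detail but not a different argument.
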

\begin{proof}
	For (i) we first consider the case $\sang=0$ so $a = \rho$. From \eqref{mobtrans} it is seen that $M_\rho$ is symmetric about the real axis so the centre of $M_\rho(B_{0,r})$ lies on the real axis. Furthermore, the mapping of $M_\rho(r)$ and $M_\rho(-r)$ gives the following real points on $\partial M_\rho(B_{0,r})$:
	\begin{equation*}
		M_\rho(r) = \frac{r-\rho}{\rho r - 1}, \qquad M_\rho(-r) = \frac{r+\rho}{\rho r+1},
	\end{equation*}
	where $M_\rho(-r) > M_\rho(r)$ for all $\rho<1$. Thus centre $c$ and radius $R$ of $M_\rho(B_{0,r})$ can be found as
	\begin{align}
		c &= \frac{M_\rho(-r)+M_\rho(r)}{2} = \frac{\rho(r^2-1)}{\rho^2r^2-1}, \label{ceq} \\
		R &= M_\rho(-r)-c = \frac{r(\rho^2-1)}{\rho^2r^2-1}. \label{Req}
	\end{align}
	Now in the case $\sang\neq 0$ we note that $M_a(B_{0,r}) = e^{i\sang}M_\rho(B_{0,r})$ due to \eqref{eq:mobreal} and that $B_{0,r}$ is rotationally symmetric. So $C = ce^{i\sang}$ which yields the desired result.
	
	For (ii) we solve \eqref{ceq} and \eqref{Req} with respect to $r$ and $\rho$, which for $h \equiv 1+R^2-c^2$ gives
	\begin{equation*}
		r = \frac{h-\sqrt{h^2-4R^2}}{2R}, \qquad \rho = \frac{c}{1-Rr}. 
	\end{equation*}
	By using that $a = \rho e^{i\sang}$ and expanding the terms in $r$ gives the expressions in \eqref{eq:ra}.
\end{proof}

Note from Proposition \ref{prop:mobdisk} that $M_a$ maps the origin $\mathcal{O}$ to $a$ in the same direction as $C$, but a little further towards the boundary as illustrated in Figure \ref{fig:fig2}. However, we will always have that $a\in B_{C,R}$ since $c < 1-R$ and $r<1$ implies
\begin{equation*}
	\abs{a-C} = \rho-c = \frac{c}{1-Rr}-c = \frac{cr}{1-Rr}R < \frac{(1-R)r}{1-Rr}R \leq R. 
\end{equation*}
Thus there is in \eqref{eq:ra} the asymptotic limit
\begin{equation*}
	\lim_{r\to 0} a = \lim_{R\to 0} a = C.
\end{equation*}

\begin{figure}[htb]
\centering
\includegraphics[width = .9\textwidth]{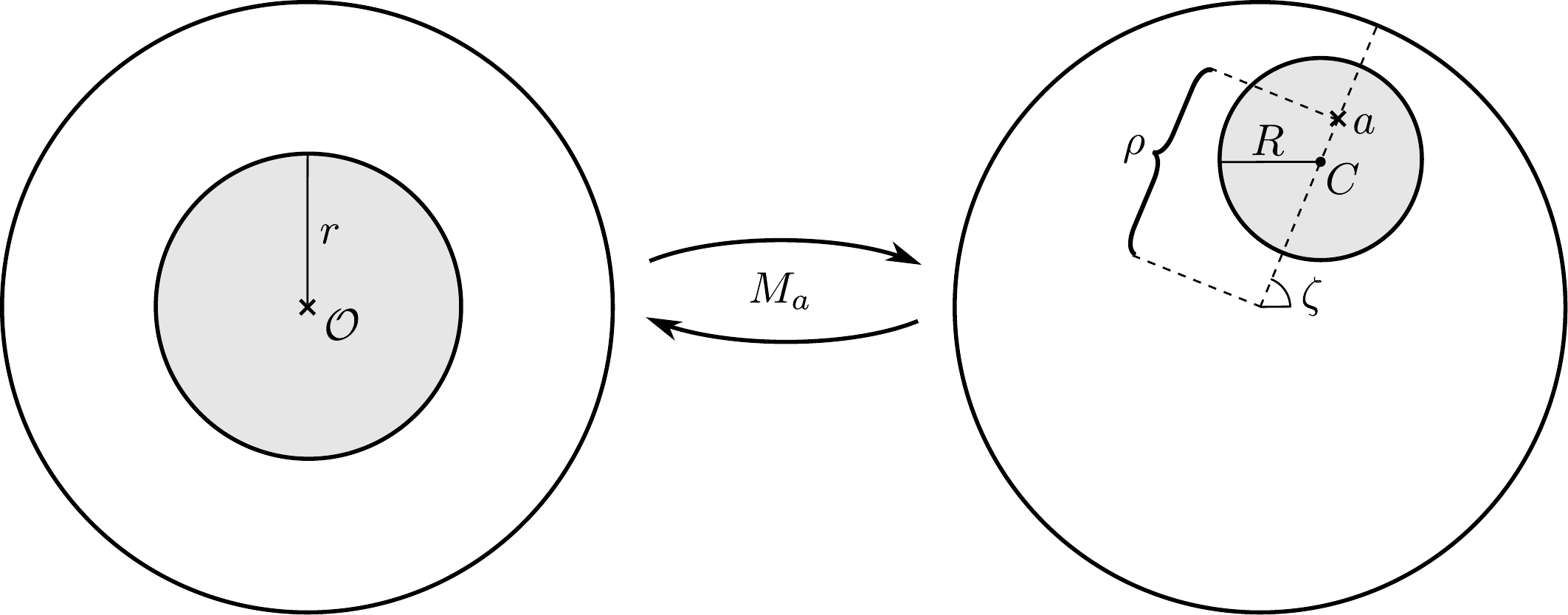}
\caption{Illustration of the action of $M_a$ on ball inclusions in the unit disk $\rum{D}$ using the notation in Proposition \ref{prop:mobdisk}.} \label{fig:fig2}
\end{figure}

Writing $M_a(x) = V_1(x) + iV_2(x)$ for real valued $V_1$ and $V_2$, and similarly ${x = x_1}+ix_2$, then as $M_a$ is holomorphic on $\rum{D}$ the Cauchy-Riemann equations hold
\begin{equation*}
	\tfrac{\partial}{\partial x_1}V_{1} = \tfrac{\partial}{\partial x_2}V_{2}, \quad \tfrac{\partial}{\partial x_2}V_{1} = -\tfrac{\partial}{\partial x_1}V_{2},
\end{equation*}
so the Jacobian determinant of $M_a$ becomes:
\begin{equation}
	J_a(x) = \left(\tfrac{\partial}{\partial x_1}V_{1}\right)^2 + \left(\tfrac{\partial}{\partial x_1}V_{2}\right)^2 = \abs{\tfrac{\partial}{\partial x_1}M_{a}}^2 = \left( \frac{1-\abs{a}^2}{\abs{\overline{a}x - 1}^2} \right)^2. \label{eq:jacdom}
\end{equation}
$J_a$ is the Jacobian determinant for the transformation on the whole domain $\rum{D}$, but for the purpose of transforming the boundary operator $\Lambda(\gamma)$ it is necessary to determine the corresponding transformation on the boundary, i.e.\ determining the tangential and normal part to the Jacobian matrix on $\partial\rum{D}$. Denote for $x\in\rum{D}$ the polar coordinates $x = \beta e^{i\theta}$ and $M_a(x) = Be^{i\Theta}$. We have the following relations on $\partial\rum{D}$:
\begin{align}
	\frac{\partial B}{\partial\theta}|_{\partial\rum{D}} &= \frac{\partial\Theta}{\partial \beta}|_{\partial\rum{D}} = 0, \notag \\
	\frac{\partial B}{\partial \beta}|_{\partial\rum{D}} &= \frac{\partial\Theta}{\partial\theta}|_{\partial\rum{D}} = J_a^{1/2}|_{\partial\rum{D}} = \frac{1-\rho^2}{1+\rho^2-2\rho\cos(\theta-\sang)}. \label{eq:jacdetbdry}
\end{align}
Deriving the terms in \eqref{eq:jacdetbdry} involves straightforward computations using that $M_a$ maps $\partial\rum{D}$ to itself, along with the following identities which are a consequence of the Cauchy-Riemann equations and \eqref{eq:mobreal}
\begin{align*}
		\redel(\tfrac{\partial M_a}{\partial \beta})M_a - \tfrac{\partial M_a}{\partial \beta}\redel(M_a) &= 0,\enskip \text{ on } \partial\rum{D}, \\
		\imdel(\tfrac{\partial M_a}{\partial \beta})M_a - \tfrac{\partial M_a}{\partial \beta}\imdel(M_a) &= 0,\enskip \text{ on } \partial\rum{D}. 
\end{align*}

\subsection{Transformation of the DN map}

In this section we will write up the DN map for the problem transformed by $M_a$ for disk perturbations. Denote $\gamma_{C,R} \equiv 1 + A\chi_{B_{C,R}}$ for $A>-1$ where $\chi_{B_{C,R}}$ is a characteristic function over the open ball $B_{C,R}$ with centre $C$ and radius $R$. Furthermore, the notation in Proposition \ref{prop:mobdisk} will be used throughout, relating $a$ and $r$ to $C$ and $R$. The background conductivity of $1$ is merely for ease of presentation, and can easily be changed to another (constant) background using the identity
\begin{equation*}
	\Lambda(c\gamma) = c\Lambda(\gamma),\enskip c>0.
\end{equation*}

By $\mathcal{M}_a$ we denote the operator applying the transformation $\mathcal{M}_a f \equiv f\circ M_a$, where either $f:\rum{D}\to\rum{C}$ or $f:\partial\rum{D}\to\rum{C}$. Furthermore, we will use the notation $J_a^{1/2}$ both for the square root of \eqref{eq:jacdom} and for the multiplication operator $f\mapsto J_a^{1/2}|_{\partial\rum{D}}f$, indiscriminately. Before investigating the DN map we list a few basic properties.
\begin{proposition} \label{prop:list}
	\begin{romannum}
		\item $\mathcal{M}_a(H^{1/2}(\partial\rum{D})) = H^{1/2}(\partial\rum{D})$ and $\mathcal{M}_a(L^2(\partial\rum{D})) = L^2(\partial\rum{D})$. \label{prop1}
		\item $\mathcal{M}_a$ and $J_a^{1/2}\mathcal{M}_a$ are involutions, i.e.\ their own inverse.\label{prop2}
		\item $J_a^{1/2}\mathcal{M}_a = \mathcal{M}_aJ_a^{-1/2}$ and $J_a^{-1/2}\mathcal{M}_a = \mathcal{M}_aJ_a^{1/2}$.\label{prop3}
		\item ${\mathcal{M}_a}^* = J_a^{1/2}\mathcal{M}_a$ in $L^2(\partial\rum{D})$.\label{prop4}
		\item $J_a^{1/2}$ is on $\partial\rum{D}$ bounded from below and above by positive constants:\label{prop5}
		\begin{equation*}
			\frac{1-\rho}{1+\rho}\leq J_a^{1/2} \leq \frac{1+\rho}{1-\rho}.
		\end{equation*}
	\end{romannum}
\end{proposition}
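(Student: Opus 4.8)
The plan is to establish the five properties in sequence, exploiting the explicit formula \eqref{mobtrans} and the fact that $M_a$ is an involution mapping $\partial\rum{D}$ to itself. For property \eqref{prop1}, I would note that $M_a$ is a smooth diffeomorphism of $\partial\rum{D}$ onto itself with nonvanishing derivative bounded above and below (this is essentially \eqref{eq:jacdetbdry}), so composition with $M_a$ preserves both $L^2(\partial\rum{D})$ and the Sobolev space $H^{1/2}(\partial\rum{D})$; the boundedness below and above of $J_a^{1/2}$ is precisely what guarantees the change-of-variables does not degenerate. Property \eqref{prop2} for $\mathcal{M}_a$ follows immediately from $M_a^{-1}=M_a$, since $\mathcal{M}_a\mathcal{M}_a f = f\circ M_a\circ M_a = f$.

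For the chain rule relations in property \eqref{prop3}, the key observation is how $J_a$ transforms under $M_a$. Since $M_a$ is an involution, differentiating the identity $M_a(M_a(x)) = x$ and using the multiplicativity of Jacobian determinants under composition gives $J_a(M_a(x))\,J_a(x) = 1$ on the relevant set, i.e.\ $\mathcal{M}_a J_a = J_a^{-1}$ as multiplication operators, and hence $\mathcal{M}_a J_a^{1/2} = J_a^{-1/2}$. Rearranging this commutation relation yields both identities in \eqref{prop3}. Property \eqref{prop2} for the operator $J_a^{1/2}\mathcal{M}_a$ then follows by combining \eqref{prop3} with the involution property of $\mathcal{M}_a$: one computes $(J_a^{1/2}\mathcal{M}_a)(J_a^{1/2}\mathcal{M}_a) = J_a^{1/2}(\mathcal{M}_a J_a^{1/2})\mathcal{M}_a = J_a^{1/2}J_a^{-1/2}\mathcal{M}_a\mathcal{M}_a = \ident$.

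Property \eqref{prop4}, identifying the adjoint, is the step I expect to require the most care, as it is where the Jacobian weight genuinely enters. I would compute $\inner{\mathcal{M}_a f, g}$ directly via the change of variables $y = M_a(x)$ on $\partial\rum{D}$; the one-dimensional boundary change-of-variables introduces the factor $\partial\Theta/\partial\theta = J_a^{1/2}$ from \eqref{eq:jacdetbdry}, giving $\inner{\mathcal{M}_a f, g} = \inner{f, J_a^{1/2}\mathcal{M}_a g}$ after using the involution property to rewrite the transformed integral back over $\partial\rum{D}$. This identifies $\mathcal{M}_a^* = J_a^{1/2}\mathcal{M}_a$. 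The main obstacle is bookkeeping the weight correctly: one must verify that the arclength element transforms by exactly $J_a^{1/2}$ and that applying $\mathcal{M}_a$ inside the integral to undo the substitution produces the stated composition rather than its inverse; here the self-inverse property of $M_a$ is what makes the weight come out as $J_a^{1/2}$ evaluated on the original variable.

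Finally, property \eqref{prop5} is a direct estimate on the explicit expression \eqref{eq:jacdetbdry}. Writing $J_a^{1/2} = (1-\rho^2)/(1+\rho^2 - 2\rho\cos(\theta-\sang))$, the denominator attains its extreme values when $\cos(\theta-\sang) = \pm 1$, giving $(1\mp\rho)^2$; substituting these bounds and simplifying $(1-\rho^2)/(1+\rho)^2 = (1-\rho)/(1+\rho)$ and $(1-\rho^2)/(1-\rho)^2 = (1+\rho)/(1-\rho)$ yields the claimed two-sided bound.
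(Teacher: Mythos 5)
Your proposal is correct and follows essentially the same route as the paper: (v) by extremizing the explicit denominator, (iii) from the chain rule applied to the involution identity (the paper invokes the inverse function theorem, which is the same computation), (ii) for $J_a^{1/2}\mathcal{M}_a$ by combining (iii) with $M_a^{-1}=M_a$, (iv) by the boundary change of variables with weight $J_a^{1/2}$, and (i) from smoothness plus the two-sided bound on $J_a^{1/2}$ together with the involution to get equality of the image spaces. No gaps.
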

\begin{proof}
	(iii) is a consequence of the inverse function theorem. For (ii) $\mathcal{M}_a$ is an involution since $M_a$ is an involution, and from (iii)
	\begin{equation*}
		J_a^{1/2}\mathcal{M}_aJ_a^{1/2}\mathcal{M}_a = \mathcal{M}_aJ_a^{-1/2}J_a^{1/2}\mathcal{M}_a = \ident.
	\end{equation*}
	(iv) follows since $M_a^{-1}=M_a$ and $J_a^{1/2}$ is real-valued and is the Jacobian determinant for the boundary integral. For (v) we have
	\begin{align*}
		\inf_{\theta\in(-\pi,\pi)}J_a^{1/2}(e^{i\theta}) &= \inf_{\theta\in(-\pi,\pi)}\frac{1-\rho^2}{\abs{1-\rho e^{i(\theta-\sang)}}^2} = \frac{1-\rho^2}{(1+\rho)^2} = \frac{1-\rho}{1+\rho}, \\
		\sup_{\theta\in(-\pi,\pi)}J_a^{1/2}(e^{i\theta}) &= \sup_{\theta\in(-\pi,\pi)}\frac{1-\rho^2}{\abs{1-\rho e^{i(\theta-\sang)}}^2} = \frac{1-\rho^2}{(1-\rho)^2} = \frac{1+\rho}{1-\rho}.
	\end{align*}
	That $M_a$ is smooth and $J_a^{1/2}$ bounded from below and above by positive constants implies that $\mathcal{M}_a(H^{1/2}(\partial\rum{D}))\subseteq H^{1/2}(\partial\rum{D})$, and $M_a$ being an involution implies the opposite inclusion $H^{1/2}(\partial\rum{D})\subseteq \mathcal{M}_a(H^{1/2}(\partial\rum{D}))$. The same argument is used to show that $\mta(L^2(\partial\rum{D})) = L^2(\partial\rum{D})$.
\end{proof}

Applying $\mta$ to a distribution in $H^{-1/2}(\partial\rum{D})$ is done as a generalization of the change of variables through the dual pairing
\begin{equation*}
	\inner{\mta g,f} \equiv \inner{g,J_a^{1/2}\mta f},\enskip g\in H^{-1/2}(\partial\rum{D}), f\in H^{1/2}(\partial\rum{D}).
\end{equation*}
Now we can write up the DN maps for an inclusion transformed with $\mathcal{M}_a$.
\begin{lemma} \label{dntrans}
	There is the following relation between the DN map for the concentric problem and the DN map for the non-concentric problem:
	\begin{equation}
		\Lambda(\gamma_{C,R}) = \Lambda(\mathcal{M}_a(\gamma_{0,r})) = J_a^{1/2}\mathcal{M}_a\Lambda(\gamma_{0,r})\mathcal{M}_a, \label{dnmaptransform}
	\end{equation}
	and similarly
	\begin{equation*}
		\Lambda(\gamma_{0,r}) = \Lambda(\mathcal{M}_a(\gamma_{C,R})) = J_a^{1/2}\mathcal{M}_a\Lambda(\gamma_{C,R})\mathcal{M}_a.
	\end{equation*}
\end{lemma}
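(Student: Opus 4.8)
The plan is to establish the two stated equalities separately. The first, $\Lambda(\gamma_{C,R}) = \Lambda(\mta(\gamma_{0,r}))$, is purely a statement about the conductivities, so I would simply verify that $\mta(\gamma_{0,r}) = \gamma_{0,r}\circ M_a = \gamma_{C,R}$. Indeed $\chi_{B_{0,r}}\circ M_a = \chi_{M_a^{-1}(B_{0,r})} = \chi_{M_a(B_{0,r})} = \chi_{B_{C,R}}$, using that $M_a$ is an involution (so $M_a^{-1} = M_a$) and that $M_a(B_{0,r}) = B_{C,R}$ by Proposition~\ref{prop:mobdisk}. Hence $\mta(\gamma_{0,r}) = 1 + A\chi_{B_{C,R}} = \gamma_{C,R}$ and the two DN maps coincide.

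For the operator identity I would argue through the weak (bilinear) form of the DN map rather than pointwise normal derivatives, since this is where the boundary Jacobian enters most transparently. Recall that for $f,h\in H^{1/2}(\partial\rum{D})$ one has $\inner{\Lambda(\gamma)f,h} = \int_{\rum{D}}\gamma\nabla u_f\cdot\nabla u_h\,dx$, where $u_f,u_h$ solve the conductivity equation with boundary data $f,h$. Taking $\gamma = \gamma_{C,R}$ and performing the change of variables $x = M_a(z)$, the conformality of $M_a$ is decisive: since $DM_a = J_a^{1/2}O$ with $O$ orthogonal, the chain rule gives $\nabla_z(u\circ M_a)\cdot\nabla_z(w\circ M_a) = J_a\,(\nabla u\cdot\nabla w)\circ M_a$, while the volume element contributes $dx = J_a\,dz$. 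These two factors of $J_a$ cancel --- this is precisely the harmonic morphism property, the conformal invariance of the weighted Dirichlet energy in 2D --- leaving
\begin{equation*}
	\inner{\Lambda(\gamma_{C,R})f,h} = \int_{\rum{D}}(\gamma_{C,R}\circ M_a)\nabla(\mta u_f)\cdot\nabla(\mta u_h)\,dz = \int_{\rum{D}}\gamma_{0,r}\nabla(\mta u_f)\cdot\nabla(\mta u_h)\,dz,
\end{equation*}
where I again use $\gamma_{C,R}\circ M_a = \gamma_{0,r}$. The same cancellation shows that $\mta u_f$ is the concentric solution with boundary data $\mta f$: it lies in $H^1(\rum{D})$, has the correct trace since $M_a(\partial\rum{D}) = \partial\rum{D}$, and its weak formulation is exactly the transformed one. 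Therefore the right-hand side equals $\inner{\Lambda(\gamma_{0,r})\mta f,\mta h}$, giving $\inner{\Lambda(\gamma_{C,R})f,h} = \inner{\Lambda(\gamma_{0,r})\mta f,\mta h}$.

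It then remains to move the two transformation operators onto one side. Using the adjoint relation $\mta^* = J_a^{1/2}\mta$ from Proposition~\ref{prop:list}(iv), interpreted in the $H^{-1/2}$--$H^{1/2}$ duality pairing, I would write $\inner{\Lambda(\gamma_{0,r})\mta f,\mta h} = \inner{J_a^{1/2}\mta\Lambda(\gamma_{0,r})\mta f,h}$. Since $f,h$ range over all of $H^{1/2}(\partial\rum{D})$ and the pairing is non-degenerate, this yields $\Lambda(\gamma_{C,R}) = J_a^{1/2}\mta\Lambda(\gamma_{0,r})\mta$. The second displayed identity follows by the identical argument with the roles of $B_{0,r}$ and $B_{C,R}$ interchanged, which is legitimate precisely because $M_a$ is an involution. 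The main obstacle I anticipate is not any single computation but the clean bookkeeping of the two distinct Jacobians: the full-domain factor $J_a$ that cancels in the interior integral and must be seen to do so, versus the boundary factor $J_a^{1/2}$ that genuinely survives and is absorbed through Proposition~\ref{prop:list}(iv). The conformality of $M_a$, equivalently the Cauchy--Riemann relations recorded before the statement, is exactly what makes the interior cancellation work and must be invoked explicitly.
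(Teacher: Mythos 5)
Your proof is correct, but it takes a genuinely different route from the paper's. The paper argues in the strong form: it rewrites the conductivity equation for $\gamma_{0,r}$ as a transmission problem (harmonic in $B_{0,r}$ and in $\rum{D}\setminus\overline{B_{0,r}}$, with continuity of the potential and of $(1+A)$ times the normal flux across the interface), transforms each equation and each interface and boundary condition under $M_a$, and reads off the transformed Neumann datum $\nu\cdot\nabla\tilde u_2 = J_a^{1/2}\tilde g$ on $\partial\rum{D}$ directly from the boundary Jacobian \eqref{eq:jacdetbdry}. You instead stay entirely in the variational formulation: conformal invariance of the weighted Dirichlet energy in 2D gives $\inner{\Lambda(\gamma_{C,R})f,h} = \inner{\Lambda(\gamma_{0,r})\mta f,\mta h}$, and the surviving boundary factor $J_a^{1/2}$ is then produced by the adjoint/duality relation $\mta^* = J_a^{1/2}\mta$ rather than by transforming a normal derivative. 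Your version buys you that the transmission conditions on $\partial B_{0,r}$ never need to be written down --- they are already encoded in the weak form for the $L^\infty$ conductivity --- and that the identity is obtained with $\Lambda(\gamma)f$ interpreted only as an element of $H^{-1/2}(\partial\rum{D})$, with no pointwise normal derivatives; the paper's version is more explicit about the geometric origin of $J_a^{1/2}$ as the tangential stretching of $\partial\rum{D}$. Two small points of hygiene: the symmetric form should carry a conjugate on the second factor, $\int_{\rum{D}}\gamma\nabla u_f\cdot\overline{\nabla u_h}\,dx$, if complex data are admitted (as the paper does in Lemma~\ref{prop:monodn}); and your final step $\inner{\Lambda(\gamma_{0,r})\mta f,\mta h} = \inner{J_a^{1/2}\mta\Lambda(\gamma_{0,r})\mta f,h}$ deserves one extra line, since Proposition~\ref{prop:list}(iv) is stated only on $L^2(\partial\rum{D})$ --- in the $H^{-1/2}$--$H^{1/2}$ pairing it is exactly the paper's definition of $\mta$ on distributions combined with $J_a^{1/2}\circ M_a = J_a^{-1/2}$ from Proposition~\ref{prop:list}(iii). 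Neither point affects the validity of the argument.
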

\begin{proof}
	For brevity let $\tilde{w}$ be a shorthand notation for $\mathcal{M}_a w$ where $w$ is either a function on $\partial\rum{D}$ or on $\rum{D}$. Let $u$ be the solution to \eqref{eq:condeq} with conductivity $\gamma_{0,r}$ and Dirichlet condition $u|_{\partial\rum{D}} = f$. Denote the corresponding Neumann condition $g\equiv \Lambda(\gamma_{0,r})f = \nu\cdot\nabla u|_{\partial\rum{D}}$. Furthermore, let $u_1 \equiv u$ in $B_{0,r}$ and $u_2\equiv u$ in $\rum{D}\setminus\overline{B_{0,r}}$. Then as $\gamma_{0,r} = 1+A\chi_{B_{0,r}}$ and $\gamma_{C,R} = 1+A\chi_{B_{C,R}}$ we can write up \eqref{eq:condeq}, along with Dirichlet- and Neumann-conditions as the following system, alongside with the corresponding transformed problem. This gives the following two transmission problems:
	
	\noindent
	\begin{tabular}[!H]{l|l}
	\parbox{0.45\linewidth}{
	\begin{align*}
		\Delta u_1 &= 0 \text{ in } B_{0,r} \\
		\Delta u_2 &= 0 \text{ in } \rum{D}\setminus\overline{B_{0,r}} \\
		u_1 &= u_2 \text{ on } \partial B_{0,r} \\
		(1+A)\eta\cdot\nabla u_1 &= \eta\cdot \nabla u_2 \text{ on } \partial B_{0,r} \\
		u_2 &= f \text{ on } \partial\rum{D} \\
		\nu\cdot\nabla u_2 &= g \text{ on } \partial\rum{D}
	\end{align*}
	} 
	& \parbox{0.45\linewidth}{
	\begin{align*}
		\Delta \tilde{u}_1 &= 0 \text{ in } B_{C,R} \\
		\Delta \tilde{u}_2 &= 0 \text{ in } \rum{D}\setminus\overline{B_{C,R}} \\
		\tilde{u}_1 &= \tilde{u}_2 \text{ on } \partial B_{C,R} \\
		(1+A)\eta\cdot\nabla \tilde{u}_1 &= \eta\cdot \nabla \tilde{u}_2 \text{ on } \partial B_{C,R} \\
		\tilde{u}_2 &= \tilde{f} \text{ on } \partial\rum{D} \\
		\nu\cdot\nabla \tilde{u}_2 &= J_a^{1/2}\tilde{g} \text{ on } \partial\rum{D}
	\end{align*}
	}
	\end{tabular}
	
	Some notational abuse was used as $\eta$ is both unit normal to $\partial B_{0,r}$ and to $\partial B_{C,R}$ in the transformed problem. The Laplace-Beltrami operator is preserved as $M_a$ is a harmonic morphism, and the Dirichlet conditions simply apply the change of variable. The only real change occurs in the derivatives, which on the boundary $\partial B_{C,R}$ cancels out  as $J_a$ is non-zero, and on the outer boundary $\partial\rum{D}$ gives $J_a^{-1/2}\nu\cdot \nabla \tilde{u}_2$ from \eqref{eq:jacdetbdry} and the property $\mathcal{M}_a J_a^{1/2} = J_a^{-1/2}\mathcal{M}_a$.
	
	Thus we have
	\begin{equation*}
		\Lambda(\gamma_{C,R})\tilde{f} = J_a^{1/2}\mathcal{M}_a g = J_a^{1/2}\mathcal{M}_a \Lambda(\gamma_{0,r})f = J_a^{1/2}\mathcal{M}_a \Lambda(\gamma_{0,r})\mathcal{M}_a\tilde{f},\enskip \forall \tilde{f}\in H^{1/2}(\partial\rum{D}).
	\end{equation*}
	One can interchange $\gamma_{0,r}$ and $\gamma_{C,R}$ above by Proposition \ref{prop:list} since $\mathcal{M}_a$ and $J_a^{1/2}\mathcal{M}_a$ are involutions.
\end{proof}

\section{Depth dependent bounds on distinguishability of inclusions} \label{sec:dnbnds}

In this section we determine lower and upper bounds for the distinguishability of $\Lambda(\gamma_{C,R})-\Lambda(1)$, in terms of its largest eigenvalue. The bounds are given in Theorem \ref{dnbounds}. 

The spectrum of $\Lambda(\gamma_{0,r})$ is given below and is derived from a straightforward application of separation of variables, cf.\ \cite[chapter 12.5.1]{Mueller_2012}. Since the eigenfunctions of $\Lambda(\gamma_{0,r})$ and $\Lambda(1)$ are identical, the eigenvalues of the difference operator $\Lambda(\gamma_{0,r}) - \Lambda(1)$ is just the difference of the eigenvalues for the two respective operators. This simplification of course only holds if the eigenfunctions are identical, i.e.\ it will not be the case for the non-concentric problem.
\begin{proposition} \label{prop:eigval}
	For $\gamma_{0,r} \equiv 1+A\chi_{B_{0,r}}$ with $0<r<1$ and $A>-1$, the eigenfunctions of $\Lambda(\gamma_{0,r})$ are $f_n(\theta) \equiv \frac{1}{\sqrt{2\pi}}e^{in\theta},\enskip n\in\rum{Z}$. The corresponding eigenvalues are
	\begin{equation*}
		\lambda_n \equiv \frac{2+A(1+r^{2\abs{n}})}{2+A(1-r^{2\abs{n}})}\abs{n},\enskip n\in\rum{Z}.
	\end{equation*}
	The eigenvalues for the difference operator $\Lambda(\gamma_{0,r})-\Lambda(1)$ are
	\begin{equation}
		\lambda_n \equiv \frac{2Ar^{2\abs{n}}}{2+A(1-r^{2\abs{n}})}\abs{n},\enskip n\in\rum{Z}. \label{eq:lamdiff}
	\end{equation}
\end{proposition}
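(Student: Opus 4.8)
The plan is to solve the forward problem \eqref{eq:condeq} explicitly for each Fourier mode and read the eigenvalue off the resulting Neumann data. Since $\gamma_{0,r}$ is radially symmetric, the operator $\Lambda(\gamma_{0,r})$ commutes with rotations of $\partial\rum{D}$, so it is diagonalised by the Fourier basis $f_n(\theta) = \frac{1}{\sqrt{2\pi}}e^{in\theta}$; it therefore suffices to verify that $e^{in\theta}$ is an eigenfunction and to compute the corresponding eigenvalue. Because $\gamma_{0,r}$ is piecewise constant, the conductivity equation reduces to Laplace's equation in each of the two regions $B_{0,r}$ and $\rum{D}\setminus\overline{B_{0,r}}$. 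In polar coordinates $x = \beta e^{i\theta}$ I would make the separated ansatz $u_1 = a\beta^{\abs{n}}e^{in\theta}$ inside the ball (discarding the singular $\beta^{-\abs{n}}$ part to keep $u$ bounded at the origin) and $u_2 = (b\beta^{\abs{n}} + c\beta^{-\abs{n}})e^{in\theta}$ in the annulus, with the $n=0$ case handled separately by a constant solution giving $\lambda_0 = 0$.

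Next I would pin down the three coefficients from the interface and boundary conditions listed in the transmission problem of Lemma \ref{dntrans}. Continuity of $u$ across $\partial B_{0,r}$ and continuity of the flux $(1+A)\partial_\beta u_1 = \partial_\beta u_2$ at $\beta = r$ give two linear relations, and the Dirichlet condition $u_2|_{\beta=1} = e^{in\theta}$ gives $b + c = 1$. Solving this $3\times 3$ system yields $c = -\tfrac{1}{2}A r^{2\abs{n}}a$ and $b = \tfrac{1}{2}(2+A)a$ with $a = 2/(2+A(1-r^{2\abs{n}}))$. Since $\gamma \equiv 1$ near $\partial\rum{D}$, the Neumann data is $\Lambda(\gamma_{0,r})e^{in\theta} = \partial_\beta u_2|_{\beta=1} = \abs{n}(b-c)e^{in\theta}$, and substituting the coefficients gives exactly $\lambda_n = \frac{2+A(1+r^{2\abs{n}})}{2+A(1-r^{2\abs{n}})}\abs{n}$.

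Finally, the eigenvalues of the difference operator follow at once. The DN map $\Lambda(1)$ for the homogeneous disk has the same eigenfunctions $f_n$ with eigenvalues $\abs{n}$ (the case $A = 0$ above), so because the eigenfunctions coincide the eigenvalues of $\Lambda(\gamma_{0,r}) - \Lambda(1)$ are simply $\lambda_n - \abs{n}$, which simplifies to $\frac{2Ar^{2\abs{n}}}{2+A(1-r^{2\abs{n}})}\abs{n}$ as claimed. There is no deep obstacle here: the computation is routine linear algebra once the ansatz is fixed. The only points requiring care are the justification that the Fourier modes exhaust the spectrum — which I would obtain from the rotational-symmetry diagonalisation argument together with completeness of $\{f_n\}$ in $L^2(\partial\rum{D})$ — and the separate, degenerate treatment of the $n=0$ mode.
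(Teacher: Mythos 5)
Your computation is correct: the coefficients $a$, $b$, $c$ and the resulting Neumann data all check out, and the difference-operator eigenvalues follow as you say. The paper gives no proof of this proposition beyond citing it as "a straightforward application of separation of variables," which is exactly the argument you carry out, so your proposal matches the paper's intended approach.
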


\begin{remark}
	The eigenvalues in \eqref{eq:lamdiff} are not necessarily monotonously decaying in $\abs{n}$. This depends on the values of $A$ and $r$. This is unlike the Neumann-to-Dirichlet operators for which the eigenvalues have monotonous decay as seen in Proposition~\ref{lemndeig}.
\end{remark}

$\Lambda(\gamma)$ is an unbounded operator on $L^2(\partial\rum{D})$ for any $\gamma\in L^\infty_+(\rum{D})$, however the difference $\Lambda(\gamma_{C,R})-\Lambda(1)$ is infinitely smoothing as $\gamma_{C,R} = 1$ in a neighbourhood of $\partial\rum{D}$ (see e.g.\ \cite[Lemma 3.1]{Cornean2006}). In fact $\Lambda(\gamma_{C,R})-\Lambda(1)$ extends continuously to a compact and self-adjoint operator on all of $L^2(\partial\rum{D})$, and it is for this extension that we determine distinguishability bounds. In lack of a proper reference to such a result we give the proof below for our specific scenario.
\begin{lemma} \label{lemma:DNL2bnd}
	For each centre $C$ and radius $R$ such that $\overline{B_{C,R}}\subset\rum{D}$, the operator $\Lambda(\gamma_{C,R})-\Lambda(1)$ continuously extends to a compact and self-adjoint operator in $\mathcal{L}(L^2(\partial\rum{D}))$.
\end{lemma}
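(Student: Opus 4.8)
The plan is to settle the concentric case first, where the operator is explicitly diagonal, and then transport the conclusion to the non-concentric case through the conjugation identity supplied by Lemma~\ref{dntrans}.

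First I would analyse $\Lambda(\gamma_{0,r})-\Lambda(1)$. By Proposition~\ref{prop:eigval} this operator is diagonalized by the Fourier orthonormal basis $\{f_n\}_{n\in\rum{Z}}$ of $L^2(\partial\rum{D})$, with the real eigenvalues $\lambda_n$ of \eqref{eq:lamdiff}. Since $0<r<1$ gives $1-r^{2\abs{n}}\in[0,1)$ and $A>-1$, the denominator $2+A(1-r^{2\abs{n}})$ stays bounded below by a positive constant, so that $\abs{\lambda_n}$ is bounded by a constant multiple of $\abs{n}\,r^{2\abs{n}}$; in particular the $\lambda_n$ are bounded and $\lambda_n\to 0$ as $\abs{n}\to\infty$. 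Because the $\lambda_n$ are real with $\lambda_n=\lambda_{-n}$, the formula $K\equiv\sum_{n\in\rum{Z}}\lambda_n\inner{\,\cdot\,,f_n}f_n$ defines a bounded self-adjoint operator on all of $L^2(\partial\rum{D})$ extending $\Lambda(\gamma_{0,r})-\Lambda(1)$, and since its eigenvalues tend to zero it is the operator-norm limit of its finite-rank truncations, hence compact.

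Next I would relate the two geometries. Lemma~\ref{dntrans} gives $\Lambda(\gamma_{C,R})=J_a^{1/2}\mathcal{M}_a\Lambda(\gamma_{0,r})\mathcal{M}_a$, and the same lemma applied with $A=0$, so that both $\gamma_{0,r}$ and $\gamma_{C,R}$ collapse to the constant conductivity $1$, yields $\Lambda(1)=J_a^{1/2}\mathcal{M}_a\Lambda(1)\mathcal{M}_a$. Subtracting these produces the clean identity
\begin{equation*}
  \Lambda(\gamma_{C,R})-\Lambda(1)=J_a^{1/2}\mathcal{M}_a\,K\,\mathcal{M}_a \quad\text{on } H^{1/2}(\partial\rum{D}).
\end{equation*}
By Proposition~\ref{prop:list}(i) and (v) both $\mathcal{M}_a$ and $J_a^{1/2}\mathcal{M}_a$ are bounded on $L^2(\partial\rum{D})$, so the right-hand side is a bounded operator on $L^2(\partial\rum{D})$ furnishing the asserted continuous extension; compactness is then immediate, as composing the compact operator $K$ with bounded operators on either side preserves compactness.

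It remains to verify self-adjointness of the extension, which is the one step requiring genuine care: the conjugation is lopsided, since the left factor $J_a^{1/2}\mathcal{M}_a$ differs from the right factor $\mathcal{M}_a$, so self-adjointness does not come for free. I would combine Proposition~\ref{prop:list}(iv), namely $\mathcal{M}_a^{*}=J_a^{1/2}\mathcal{M}_a$, with Proposition~\ref{prop:list}(iii) and the fact that $J_a^{1/2}$ is a real multiplication operator to obtain $(J_a^{1/2}\mathcal{M}_a)^{*}=J_a^{1/2}\mathcal{M}_a J_a^{1/2}=\mathcal{M}_a$. Using $K^{*}=K$ then gives
\begin{equation*}
  (J_a^{1/2}\mathcal{M}_a K\mathcal{M}_a)^{*}=\mathcal{M}_a^{*}\,K\,(J_a^{1/2}\mathcal{M}_a)^{*}=J_a^{1/2}\mathcal{M}_a\,K\,\mathcal{M}_a,
\end{equation*}
so the extension equals its own adjoint. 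The eigenvalue-decay estimate of the first step is routine, but it is precisely where the standing hypothesis $A>-1$ enters.
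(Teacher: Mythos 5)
Your proposal is correct and follows essentially the same route as the paper: diagonalize the concentric difference operator in the Fourier basis, use boundedness and decay of the eigenvalues to get a bounded, self-adjoint, compact extension, and then transport the conclusion to the non-concentric case through the conjugation identity of Lemma~\ref{dntrans}. The only difference is that you explicitly verify self-adjointness of the conjugated operator via $(J_a^{1/2}\mathcal{M}_a)^{*}=\mathcal{M}_a$, a step the paper leaves implicit; your computation of that adjoint is correct and is a worthwhile clarification.
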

\begin{proof}
The eigenfunctions in Proposition~\ref{prop:eigval} comprise the orthonormal Fourier basis $\{f_n\}_{n\in\rum{Z}}$ for $L^2(\partial\rum{D})$. Using that $\Lambda(\gamma_{0,r})$ and $\Lambda(1)$ are symmetric operators w.r.t.\ the $L^2(\partial\rum{D})$-inner product, implies that the difference operator $\Lambda(\gamma_{0,r})-\Lambda(1)$ can be written as below, where $\lambda_n$ denotes the eigenvalues in \eqref{eq:lamdiff}:
\begin{equation}
	(\Lambda(\gamma_{0,r})-\Lambda(1))f = \sum_{n\in\rum{Z}} \lambda_n\inner{f,f_n}f_n, \enskip f\in H^{1/2}(\partial\rum{D}). \label{eq:lamdiff_exp}
\end{equation}
Since $\sup_{n\in\rum{Z}}\abs{\lambda_n} < \infty$ then \eqref{eq:lamdiff_exp} implies that $\Lambda(\gamma_{0,r})-\Lambda(1)$ is bounded in terms of the $L^2(\partial\rum{D})$-norm:
\begin{equation*}
	\norm{(\Lambda(\gamma_{0,r})-\Lambda(1))f}_{L^2(\partial\rum{D})} \leq \sup_{n\in\rum{Z}}\abs{\lambda_n}\norm{f}_{L^2(\partial\rum{D})}, \enskip f\in H^{1/2}(\partial\rum{D}),
\end{equation*}
i.e.\ using the formula in \eqref{eq:lamdiff_exp} the operator $\Lambda(\gamma_{0,r})-\Lambda(1)$ continuously extends to a self-adjoint operator in $\mathcal{L}(L^2(\partial\rum{D}))$. 

Note that $\abs{\lambda_n}\leq 2\abs{A}\abs{n}r^{2\abs{n}} \to 0$ for $n\to\infty$ implies that the extension is compact. This follows as $\Lambda(\gamma_{0,r})-\Lambda(1)$ is the limit of the finite rank operators $P_N(\Lambda(\gamma_{0,r})-\Lambda(1))$, where $P_N$ is the orthogonal projection onto $\mspan\{f_n\}_{\abs{n}\leq N}$,
\begin{align*}
	\norm{(P_N-\ident)(\Lambda(\gamma_{0,r})-\Lambda(1))}_{\mathcal{L}(L^2(\partial\rum{D}))}^2 &= \sup_{f\in L^2(\partial\rum{D})\setminus\{0\}} \frac{1}{\norm{f}^2}\sum_{\abs{n}>N} \abs{\lambda_n}^2\abs{\inner{f,f_n}}^2 \\
	&\leq \sup_{\abs{n}>N}\abs{\lambda_n}^2 \to 0 \text{ for } N\to\infty.
\end{align*}
Since $\mta$ and $J_a^{1/2}\mta$ belong to $\mathcal{L}(L^2(\partial\rum{D}))$ implies that through \eqref{dnmaptransform} then $\Lambda(\gamma_{C,R})-\Lambda(1)$ extends to a compact and self-adjoint operator in $\mathcal{L}(L^2(\partial\rum{D}))$, for any centre $C$ and radius $R$.
\end{proof}

For brevity we will denote by $\norm{\cdot}$ the operator norm on $\mathcal{L}(L^2(\partial\rum{D}))$, and it should be straightforward to distinguish it from the $L^2(\partial\rum{D})$-norm from the context it is used. It is well known from the spectral theorem that the operator norm of a compact and self-adjoint Hilbert space operator equals the largest magnitude eigenvalue of the operator, and is furthermore given by
\begin{align}
	\norm{\Lambda(\gamma_{C,R})-\Lambda(1)} &= \sup_{f\in L^2(\partial\rum{D})\setminus\{0\}}\frac{\norm{(\Lambda(\gamma_{C,R})-\Lambda(1))f}}{\norm{f}} \notag \\
	&= \sup_{f\in L^2(\partial\rum{D})\setminus\{0\}}\frac{\abs{\inner{(\Lambda(\gamma_{C,R})-\Lambda(1))f,f}}}{\norm{f}^2}. \label{eq:maxeiginnerCR}
\end{align}
Thus in reality the distinguishability is related to a choice of boundary condition (here Dirichlet condition). Choosing the eigenfunction $f_1$ to the largest magnitude eigenvalue $\lambda_1$ of $\Lambda(\gamma_{C,R})-\Lambda(1)$ maximizes the expression in \eqref{eq:maxeiginnerCR}. The min-max theorem (see e.g.\ \cite{MathPhysVol4}) furthermore states that in the orthogonal complement to $f_1$, the maximizing function is $f_2$, the eigenfunction to the second largest eigenvalue $\lambda_2$. Continuing the procedure gives an orthonormal set of boundary conditions that in each orthogonal direction maximizes the difference $(\Lambda(\gamma_{C,R})-\Lambda(1))f$. 

Suppose that we instead have a noisy approximation $\Lambda^\delta \equiv \Lambda(\gamma_{C,R}) + E^\delta$ with noise level $\norm{E^\delta} = \delta$. If we hope to be able to recover the inclusion $B_{C,R}$ from $\Lambda^\delta$ then we need $\norm{\Lambda(\gamma_{C,R})-\Lambda(1)} > \delta$ in order to \emph{distinguish} that the data $\Lambda^\delta$ does not come from the background conductivity $\gamma = 1$, and that there is an inclusion to reconstruct. The distinguishability is therefore a measure of how much noise that can be added before the structural information is completely lost. In particular the magnitude of the eigenvalues for $\Lambda(\gamma_{C,R})-\Lambda(1)$ shows whether the corresponding eigenfunctions are able to contribute any distinguishability for a given noise level.

Even though the eigenvalues for the concentric problem are known, this does not imply that Lemma \ref{dntrans} directly gives the spectrum of the non-concentric problem. As seen below, an eigenfunction $f$ of $\Lambda(\gamma_{0,r})$ does not yield an eigenfunction $\mathcal{M}_a f$ of $\Lambda(\gamma_{C,R})$ but is instead an eigenfunction of the operator scaled by $J_a^{-1/2}$. 
\begin{corollary}
	$(\lambda,f)$ is an eigenpair of $\Lambda(\gamma_{0,r})$ if and only if $(\lambda,\mathcal{M}_a f)$ is an eigenpair of $J_a^{-1/2}\Lambda(\gamma_{C,R})$.
\end{corollary}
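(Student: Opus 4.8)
The plan is to read the result off directly from the transformation identity in Lemma~\ref{dntrans}, by recognizing that $J_a^{-1/2}\Lambda(\gamma_{C,R})$ is nothing but a conjugation of $\Lambda(\gamma_{0,r})$ by the involution $\mathcal{M}_a$. First I would take the first identity of Lemma~\ref{dntrans},
\[
	\Lambda(\gamma_{C,R}) = J_a^{1/2}\mathcal{M}_a\Lambda(\gamma_{0,r})\mathcal{M}_a,
\]
and multiply on the left by the multiplication operator $J_a^{-1/2}$. Since $J_a^{1/2}$ and $J_a^{-1/2}$ are multiplication operators that cancel, this collapses to
\[
	J_a^{-1/2}\Lambda(\gamma_{C,R}) = \mathcal{M}_a\Lambda(\gamma_{0,r})\mathcal{M}_a.
\]
Because $\mathcal{M}_a$ is an involution (Proposition~\ref{prop:list}\ref{prop2}), the right-hand side is a genuine similarity transformation $\mathcal{M}_a^{-1}\Lambda(\gamma_{0,r})\mathcal{M}_a$, so $J_a^{-1/2}\Lambda(\gamma_{C,R})$ and $\Lambda(\gamma_{0,r})$ are similar operators and the eigenvalue correspondence is forced.

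From here the equivalence of eigenpairs is immediate, and I would verify both directions explicitly. For the forward direction, assuming $\Lambda(\gamma_{0,r})f = \lambda f$, I would apply $J_a^{-1/2}\Lambda(\gamma_{C,R})$ to $\mathcal{M}_a f$ and use $\mathcal{M}_a^2 = \ident$ to get
\[
	J_a^{-1/2}\Lambda(\gamma_{C,R})\mathcal{M}_a f = \mathcal{M}_a\Lambda(\gamma_{0,r})\mathcal{M}_a\mathcal{M}_a f = \mathcal{M}_a\Lambda(\gamma_{0,r})f = \lambda\mathcal{M}_a f,
\]
so $(\lambda,\mathcal{M}_a f)$ is an eigenpair of $J_a^{-1/2}\Lambda(\gamma_{C,R})$. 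The converse is the same computation run backwards: since $\mathcal{M}_a$ is a bijection on $L^2(\partial\rum{D})$ (Proposition~\ref{prop:list}\ref{prop1}), every eigenfunction of $J_a^{-1/2}\Lambda(\gamma_{C,R})$ can be written as $\mathcal{M}_a f$ for a unique $f$, and applying $\mathcal{M}_a$ to its eigenvalue equation recovers $\Lambda(\gamma_{0,r})f = \lambda f$.

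There is essentially no analytic obstacle here; the content is purely algebraic and rests entirely on the cancellation of the Jacobian factors and the involution property of $\mathcal{M}_a$. The only point that warrants a moment of care is the bookkeeping of function spaces: one should note that $\mathcal{M}_a$ maps $H^{1/2}(\partial\rum{D})$ and $L^2(\partial\rum{D})$ bijectively onto themselves (Proposition~\ref{prop:list}\ref{prop1}), so that $\mathcal{M}_a f$ lies in the same space as the original eigenfunctions and the multiplications by $J_a^{\pm 1/2}$ are well defined on it. With that observation the statement follows without further work.
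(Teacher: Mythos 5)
Your argument is correct and follows essentially the same route as the paper: both rest on rewriting the identity of Lemma~\ref{dntrans} as $J_a^{-1/2}\Lambda(\gamma_{C,R})\mathcal{M}_a f = \mathcal{M}_a\Lambda(\gamma_{0,r})f$ and using that $\mathcal{M}_a$ is an involution to run the eigenvalue equation in both directions. Your framing of the right-hand side as a similarity transformation is just a tidier way of saying the same thing.
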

\begin{proof}
	From Lemma \ref{dntrans} we have:
	\begin{equation}
		J_a^{-1/2}\Lambda(\gamma_{C,R})\mathcal{M}_a f = \mathcal{M}_a\Lambda(\gamma_{0,r}) f. \label{eq:coro1}
	\end{equation}
	If $(\lambda,f)$ is an eigenpair of $\Lambda(\gamma_{0,r})$ then \eqref{eq:coro1} gives $J_a^{-1/2}\Lambda(\gamma_{C,R})\mathcal{M}_a f = \lambda\mathcal{M}_a f$. On the other hand, if $(\lambda,\mathcal{M}_a f)$ is an eigenpair of $J_a^{-1/2}\Lambda(\gamma_{C,R})$ then \eqref{eq:coro1} gives $\mathcal{M}_a\Lambda(\gamma_{0,r})f = \lambda\mathcal{M}_a f$ and as $\mathcal{M}_a^{-1} = \mathcal{M}_a$ then $(\lambda,f)$ is an eigenpair of $\Lambda(\gamma_{0,r})$.
\end{proof}

To the authors' knowledge there is not a known closed-form expression for either eigenvalues or eigenfunctions of the non-concentric problem. However, it is possible to obtain explicit bounds, and for these bounds we will make use of certain weighted norms. 

Since $J_a^{1/2}$ is real-valued and bounded as in Proposition \ref{prop:list} gives rise to other weighted norms and inner products on $L^2(\partial\rum{D})$, namely
\begin{alignat}{2}
	\inner{f,g}_{1/2} &\equiv \int_{\partial\rum{D}} f\overline{g}J_a^{1/2}\,ds, &\quad \norm{f}_{1/2} &\equiv \sqrt{\inner{f,f}_{1/2}}, \label{eq:halfnorm} \\
	\inner{f,g}_{-1/2} &\equiv \int_{\partial\rum{D}} f\overline{g}J_a^{-1/2}\,ds, &\quad \norm{f}_{-1/2} &\equiv \sqrt{\inner{f,f}_{-1/2}}. \label{eq:mhalfnorm}
\end{alignat}
It is clear from Proposition \ref{prop:list}(v) that these weighted norms are equivalent to the usual $L^2(\partial\rum{D})$-norm:
\begin{equation}
	\sqrt{\frac{1-\rho}{1+\rho}}\norm{f} \leq \norm{f}_{\pm 1/2} \leq \sqrt{\frac{1+\rho}{1-\rho}} \norm{f}, \enskip f\in L^2(\partial\rum{D}). \label{eq:normeqv}
\end{equation}
The weighted norms are used below in Theorem \ref{dnbounds} for determining bounds on the distinguishability. The weighted inner products will turn out to be a natural choice when determining an exact matrix representation for $\Lambda(\gamma_{C,R})-\Lambda(1)$, as seen in \Sref{sec:bnds}.
\begin{theorem} \label{dnbounds}
	Let $\gamma$ be either $\gamma_{0,r}$ or $\gamma_{C,R}$. From the weighted norms \eqref{eq:halfnorm} and \eqref{eq:mhalfnorm} we obtain
	\begin{equation}
		\norm{\Lambda(\gamma)-\Lambda(1)} = \sup_{f\in L^2(\partial\rum{D})\setminus\{0\} }\frac{\norm{(\Lambda(\mathcal{M}_a\gamma)-\Lambda(1))f}_{-1/2}}{\norm{f}_{1/2}}. \label{dnnorms}
	\end{equation}
	Furthermore the following bounds hold 
	\begin{equation}
		\frac{1-\rho}{1+\rho}\norm{\Lambda(\gamma_{C,R}) - \Lambda(1)} \leq \norm{\Lambda(\gamma_{0,r}) - \Lambda(1)} \leq \sqrt{\frac{1-\rho^2}{1+\rho^2}}\norm{\Lambda(\gamma_{C,R}) - \Lambda(1)}. \label{eq:dnbnds}
	\end{equation}
\end{theorem}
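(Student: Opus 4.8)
The plan is to establish the identity \eqref{dnnorms} first, read off the lower bound in \eqref{eq:dnbnds} from the crude norm equivalence \eqref{eq:normeqv}, and then obtain the much sharper upper bound by testing the non-concentric operator against the explicitly known concentric eigenfunctions.

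For \eqref{dnnorms}, Lemma~\ref{dntrans} already records $\mathcal{M}_a\gamma_{0,r} = \gamma_{C,R}$ and $\mathcal{M}_a\gamma_{C,R} = \gamma_{0,r}$ (since $M_a$ is an involution swapping $B_{0,r}$ and $B_{C,R}$), so $\Lambda(\mathcal{M}_a\gamma)-\Lambda(1)$ is in either case the ``other'' difference operator. Combining Lemma~\ref{dntrans} with its $A=0$ special case $\Lambda(1) = J_a^{1/2}\mathcal{M}_a\Lambda(1)\mathcal{M}_a$, I would write
\begin{equation*}
	\Lambda(\gamma)-\Lambda(1) = J_a^{1/2}\mathcal{M}_a\bigl(\Lambda(\mathcal{M}_a\gamma)-\Lambda(1)\bigr)\mathcal{M}_a.
\end{equation*}
Writing the operator norm as a supremum of quotients $\norm{(\Lambda(\gamma)-\Lambda(1))g}/\norm{g}$ and substituting $g = \mathcal{M}_a f$ (a bijection of $L^2(\partial\rum{D})$ by Proposition~\ref{prop:list}(i), with $\mathcal{M}_a g = f$ by the involution property) reduces the quotient to $\norm{J_a^{1/2}\mathcal{M}_a K f}/\norm{\mathcal{M}_a f}$ where $K \equiv \Lambda(\mathcal{M}_a\gamma)-\Lambda(1)$. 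Both norms then collapse to the weighted ones via the adjoint relation $\mathcal{M}_a^* = J_a^{1/2}\mathcal{M}_a$ of Proposition~\ref{prop:list}(iv): since $\mathcal{M}_a^*\mathcal{M}_a = J_a^{1/2}$ we get $\norm{\mathcal{M}_a f}^2 = \inner{J_a^{1/2}f,f} = \norm{f}_{1/2}^2$, and since $\mathcal{M}_a\mathcal{M}_a^* = J_a^{-1/2}$ we get $\norm{J_a^{1/2}\mathcal{M}_a h}^2 = \norm{\mathcal{M}_a^* h}^2 = \inner{J_a^{-1/2}h,h} = \norm{h}_{-1/2}^2$. This is exactly \eqref{dnnorms}.

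For the lower bound I would apply \eqref{dnnorms} with $\gamma = \gamma_{0,r}$, so $K = \Lambda(\gamma_{C,R})-\Lambda(1)$, and bound the quotient below using \eqref{eq:normeqv}: from $\norm{Kf}_{-1/2}\geq\sqrt{(1-\rho)/(1+\rho)}\,\norm{Kf}$ and $\norm{f}_{1/2}\leq\sqrt{(1+\rho)/(1-\rho)}\,\norm{f}$ the supremum is at least $\tfrac{1-\rho}{1+\rho}\norm{K}$, giving the claimed lower bound.

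The upper bound is the main obstacle, since the same crude equivalence only yields the far weaker factor $\tfrac{1+\rho}{1-\rho}>1$. The idea is to exploit that the concentric operator is known explicitly. Applying \eqref{dnnorms} instead with $\gamma = \gamma_{C,R}$ gives $\norm{\Lambda(\gamma_{C,R})-\Lambda(1)} = \sup_f \norm{(\Lambda(\gamma_{0,r})-\Lambda(1))f}_{-1/2}/\norm{f}_{1/2}$, and I would lower-bound this supremum by testing against the Fourier eigenfunctions $f_n = \tfrac{1}{\sqrt{2\pi}}e^{in\theta}$ of $\Lambda(\gamma_{0,r})-\Lambda(1)$ from Proposition~\ref{prop:eigval}, each contributing $\abs{\lambda_n}\,\norm{f_n}_{-1/2}/\norm{f_n}_{1/2}$. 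The crucial observation is that $\abs{f_n}^2 = \tfrac{1}{2\pi}$ is constant, so $\norm{f_n}_{\pm1/2}^2 = \tfrac{1}{2\pi}\int_{-\pi}^{\pi}J_a^{\pm1/2}\,d\theta$ is independent of $n$; inserting the explicit boundary expression \eqref{eq:jacdetbdry} for $J_a^{1/2}$, these become Poisson-kernel integrals with values $\int_{-\pi}^{\pi}J_a^{1/2}\,d\theta = 2\pi$ and $\int_{-\pi}^{\pi}J_a^{-1/2}\,d\theta = 2\pi(1+\rho^2)/(1-\rho^2)$, whence $\norm{f_n}_{-1/2}/\norm{f_n}_{1/2} = \sqrt{(1+\rho^2)/(1-\rho^2)}$. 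Taking the supremum over $n$ and using $\sup_n\abs{\lambda_n} = \norm{\Lambda(\gamma_{0,r})-\Lambda(1)}$ (attained since the operator is compact self-adjoint, even though the $\lambda_n$ need not decay monotonically, cf.\ the Remark) yields $\norm{\Lambda(\gamma_{C,R})-\Lambda(1)}\geq\sqrt{(1+\rho^2)/(1-\rho^2)}\,\norm{\Lambda(\gamma_{0,r})-\Lambda(1)}$, which rearranges to the stated upper bound. The only delicate point is recognizing the two weighted integrals as Poisson-kernel integrals so that they evaluate in closed form; the remaining manipulations are routine bookkeeping with the involution and adjoint identities of Proposition~\ref{prop:list}.
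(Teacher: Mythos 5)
Your proposal is correct and follows essentially the same route as the paper: the weighted-norm identity via Lemma~\ref{dntrans} and a change of variables, the crude equivalence \eqref{eq:normeqv} for the lower bound, and testing against the constant-modulus concentric eigenfunctions with the Poisson-kernel integrals $\int_{\partial\rum{D}}J_a^{\pm 1/2}\,ds$ for the sharper upper bound. The only cosmetic differences are that you phrase the change of variables through the adjoint identities of Proposition~\ref{prop:list} rather than explicit boundary integrals, and that you test all Fourier modes and take a supremum instead of using only the top concentric eigenfunction.
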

\begin{proof}
	By Lemma \ref{dntrans}
	\begin{align*}
		\norm{\Lambda(\gamma)-\Lambda(1)}^2 &= \norm{J_a^{1/2}\mathcal{M}_a(\Lambda(\mathcal{M}_a\gamma)-\Lambda(1))\mathcal{M}_a}^2 \\
		&= \sup_{f\in L^2(\partial\rum{D})\setminus\{0\} }\frac{\norm{J_a^{1/2}\mathcal{M}_a(\Lambda(\mathcal{M}_a\gamma)-\Lambda(1))\mathcal{M}_a f}^2}{\norm{f}^2}.
	\end{align*}
	Now applying the change of variables with $\mta$ in both numerator and denominator, and using that $J_a^{1/2}$ is the Jacobian determinant in the boundary integral along with Proposition \ref{prop:list}(iii), yields
	\begin{align*}
		\norm{\Lambda(\gamma)-\Lambda(1)}^2 &= \sup_{f\in L^2(\partial\rum{D})\setminus\{0\} }\frac{\int_{\partial\rum{D}} J_a\mathcal{M}_a\abs{(\Lambda(\mathcal{M}_a\gamma)-\Lambda(1))\mathcal{M}_a f}^2\,ds }{\int_{\partial\rum{D}}\abs{f}^2\,ds} \\
		&= \sup_{f\in L^2(\partial\rum{D})\setminus\{0\} }\frac{\int_{\partial\rum{D}} J_a^{-1/2}\abs{(\Lambda(\mathcal{M}_a\gamma)-\Lambda(1))\mathcal{M}_a f}^2\,ds }{\int_{\partial\rum{D}}J_a^{1/2}\abs{\mta f}^2\,ds}.
	\end{align*}
	Finally, it is applied that $\mta f$ can be substituted by $f$ in the supremum since $\mta(L^2(\partial\rum{D})) = L^2(\partial\rum{D})$ and $\mathcal{M}_a f = 0 \Leftrightarrow f = 0$
	\begin{equation*}
		\norm{\Lambda(\gamma)-\Lambda(1)}^2 = \sup_{f\in L^2(\partial\rum{D})\setminus\{0\} }\frac{\int_{\partial\rum{D}} J_a^{-1/2}\abs{(\Lambda(\mathcal{M}_a\gamma)-\Lambda(1)) f}^2\,ds }{\int_{\partial\rum{D}}J_a^{1/2}\abs{f}^2\,ds},
	\end{equation*}
	which is the expression in \eqref{dnnorms}. 
	
	Let $f_1$ be the eigenfunction of $\Lambda(\gamma_{C,R})-\Lambda(1)$ corresponding to the largest eigenvalue $\lambda_1$, and similarly let $\hat{f}_1$ be the eigenfunction of $\Lambda(\gamma_{0,r})-\Lambda(1)$ corresponding to the largest eigenvalue $\hat{\lambda}_1$, then
	\begin{equation*}
		\abs{\hat{\lambda}_1} = \norm{\Lambda(\gamma_{0,r})-\Lambda(1)} \geq \frac{\norm{(\Lambda(\gamma_{C,R})-\Lambda(1))f_1}_{-1/2}}{\norm{f_1}_{1/2}} = \abs{\lambda_1}\frac{\norm{f_1}_{-1/2}}{\norm{f_1}_{1/2}}.
	\end{equation*}
	Now utilizing the norm equivalence in \eqref{eq:normeqv}
	\begin{equation*}
		\abs{\hat{\lambda}_1} \geq \abs{\lambda_1}\frac{\norm{f_1}_{-1/2}}{\norm{f_1}_{1/2}} \geq \abs{\lambda_1}\frac{\sqrt{\frac{1-\rho}{1+\rho}}\norm{f_1}}{\sqrt{\frac{1+\rho}{1-\rho}}\norm{f_1}} = \frac{1-\rho}{1+\rho}\abs{\lambda_1},
	\end{equation*}
	which is the lower bound in \eqref{eq:dnbnds}. The same can be done by interchanging $\gamma_{0,r}$ and $\gamma_{C,R}$
	\begin{equation*}
		\abs{\lambda_1} = \norm{\Lambda(\gamma_{C,R})-\Lambda(1)} \geq \frac{\norm{(\Lambda(\gamma_{0,r})-\Lambda(1))\hat{f}_1}_{-1/2}}{\norm{\hat{f}_1}_{1/2}} = \abs{\hat{\lambda}_1}\frac{\norm{\hat{f}_1}_{-1/2}}{\norm{\hat{f}_1}_{1/2}}.
	\end{equation*}
	Since $\gamma_{0,r}$ is concentric, then $\hat{f}_1$ may be chosen as a complex exponential by Proposition~\ref{prop:eigval} i.e.\ $\abs{\hat{f}_1} = 1$
	\begin{equation}
		\abs{\hat{\lambda}_1} \leq \abs{\lambda_1}\sqrt{\frac{\int_{\partial\rum{D}}J_a^{1/2}\,ds}{\int_{\partial\rum{D}}J_a^{-1/2}\,ds}}. \label{eq:bnd1}
	\end{equation}
	Here $\int_{\partial\rum{D}}J_a^{1/2}\, ds = \int_{\partial\rum{D}} 1\,ds = 2\pi$ as $J_a^{1/2}$ is the Jacobian determinant in the boundary integral. By \eqref{eq:jacdetbdry}
	\begin{equation}
		\int_{\partial\rum{D}} J_a^{-1/2}\,dx = \frac{1}{1-\rho^2}\int_0^{2\pi} \left[1+\rho^2-2\rho\cos(\theta-\sang)\right]d\theta = 2\pi\frac{1+\rho^2}{1-\rho^2}, \label{Jmhalfint}
	\end{equation}
	which combined with \eqref{eq:bnd1} gives the upper bound in \eqref{eq:dnbnds}
	\begin{equation*}
		\abs{\hat{\lambda}_1} \leq \abs{\lambda_1}\sqrt{\frac{1-\rho^2}{1+\rho^2}}.
	\end{equation*}
\end{proof}

In the bounds in Theorem \ref{dnbounds} it is worth noting that both lower and upper bound tend to zero as $\rho$ tends to $1$. When $\rho$ approaches 1, $B_{C,R}$ approaches $\partial\rum{D}$, and the largest eigenvalue of $\Lambda(\gamma_{C,R})-\Lambda(1)$ tends to infinity corresponding to $\Lambda(\gamma_{C,R})-\Lambda(1)$ diverging in $\mathcal{L}(L^2(\partial\rum{D}))$.

Since the constant in the upper bound in \eqref{eq:dnbnds} is smaller than 1 for any $0\leq\rho<1$ implies that $\norm{\Lambda(\gamma_{0,r})-\Lambda(1)} \leq \norm{\Lambda(\gamma_{C,R})-\Lambda(1)}$ for any $a\in\rum{D}$. This means that the distinguishability increases as the inclusion is moved closer to the boundary. However it does so even though $B_{C,R}$ is decreasing in size as $\lim_{\rho\to 1}R = 0$. So no matter what the size of $B_{0,r}$ is, it is always possible to construct another arbitrarily small inclusion $B_{C,R}$ sufficiently close to the boundary $\partial\rum{D}$ such that $\Lambda(\gamma_{C,R})$ is easier to distinguish from $\Lambda(1)$ than $\Lambda(\gamma_{0,r})$ is, in the presence of noise. In other words, given a noisy measurement we can expect to more stably reconstruct smaller structures of $\gamma$ near the boundary than larger structures deeper in the domain.

Combining \eqref{eq:dnbnds} with Corollary \ref{coro:mono} in Appendix \ref{sec:appA} directly gives the following upper bound on the distinguishability when then size of the inclusion is fixed.
\begin{corollary} \label{coro:fixedsize}
	For $\abs{C}\leq 1-r$ the following bounds hold
	\begin{equation*}
		\norm{\Lambda(\gamma_{0,r})-\Lambda(1)} \leq \sqrt{\frac{1-\rho^2}{1+\rho^2}}\norm{\Lambda(\gamma_{C,R})-\Lambda(1)} \leq \sqrt{\frac{1-\rho^2}{1+\rho^2}}\norm{\Lambda(\gamma_{C,r})-\Lambda(1)}.
	\end{equation*}
\end{corollary}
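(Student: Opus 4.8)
The plan is to read the statement as two inequalities chained through a common positive factor, each of which is already within reach of the machinery built so far. The left inequality
\[
\norm{\Lambda(\gamma_{0,r})-\Lambda(1)} \leq \sqrt{\tfrac{1-\rho^2}{1+\rho^2}}\,\norm{\Lambda(\gamma_{C,R})-\Lambda(1)}
\]
is verbatim the upper bound \eqref{eq:dnbnds} of Theorem~\ref{dnbounds}, with $(C,R)$ paired to $r$ through Proposition~\ref{prop:mobdisk}; nothing new is needed there beyond quoting the theorem. So the whole content of the corollary is the right inequality, and after cancelling the common positive factor $\sqrt{(1-\rho^2)/(1+\rho^2)}$ it collapses to the single monotonicity claim
\[
\norm{\Lambda(\gamma_{C,R})-\Lambda(1)} \leq \norm{\Lambda(\gamma_{C,r})-\Lambda(1)}.
\]

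To establish this I would first pin down the geometry: the two inclusions $B_{C,R}$ and $B_{C,r}$ share the same centre $C$, and Proposition~\ref{prop:mobdisk} forces $r\geq R$, so $B_{C,R}\subseteq B_{C,r}$. The hypothesis $\abs{C}\leq 1-r$ is exactly what guarantees $\overline{B_{C,r}}\subset\rum{D}$, so that $\gamma_{C,r}$ is an admissible model conductivity and, by Lemma~\ref{lemma:DNL2bnd}, the difference $\Lambda(\gamma_{C,r})-\Lambda(1)$ extends to a compact self-adjoint operator whose norm equals its largest-magnitude eigenvalue. With both difference operators compact and self-adjoint, the desired norm inequality for nested inclusions is precisely the type of statement furnished by the monotonicity result Corollary~\ref{coro:mono} in Appendix~\ref{sec:appA}, applied to the pair $B_{C,R}\subseteq B_{C,r}$.

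The only point requiring genuine care is the sign of the contrast $A$, and this is where I expect the mild obstacle to lie. Because raising the conductivity raises the Dirichlet energy $\inner{\Lambda(\gamma)f,f}=\int_{\rum{D}}\gamma\abs{\nabla u}^2\,dx$, and hence the DN map in the sense of quadratic forms, the Loewner ordering runs $\Lambda(1)\leq\Lambda(\gamma_{C,R})\leq\Lambda(\gamma_{C,r})$ when $A>0$ but reverses to $\Lambda(\gamma_{C,r})\leq\Lambda(\gamma_{C,R})\leq\Lambda(1)$ when $-1<A<0$. In the first case the two difference operators are positive semidefinite with $0\leq\Lambda(\gamma_{C,R})-\Lambda(1)\leq\Lambda(\gamma_{C,r})-\Lambda(1)$, and in the second case negative semidefinite with $\Lambda(\gamma_{C,r})-\Lambda(1)\leq\Lambda(\gamma_{C,R})-\Lambda(1)\leq 0$; in either situation the largest-magnitude eigenvalue of the operator attached to the smaller inclusion $B_{C,R}$ is dominated by that of the operator attached to $B_{C,r}$, which is the required norm inequality. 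Since Corollary~\ref{coro:mono} is built precisely to absorb this sign bookkeeping, the proof ultimately consists of checking the containment $B_{C,R}\subseteq B_{C,r}$ and the admissibility condition $\abs{C}\leq 1-r$, and then invoking Theorem~\ref{dnbounds} for the first inequality and Corollary~\ref{coro:mono} for the second.
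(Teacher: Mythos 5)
Your proposal is correct and follows essentially the same route as the paper, which obtains the corollary by combining the upper bound in \eqref{eq:dnbnds} with the monotonicity result of Corollary~\ref{coro:mono} applied to the nested inclusions $B_{C,R}\subseteq B_{C,r}$ (using $r\geq R$ from Proposition~\ref{prop:mobdisk}). Your additional remarks on the sign of $A$ are already absorbed into the statement of Corollary~\ref{coro:mono}, so nothing further is needed.
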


\section{Comparison of bounds on the dinstinguishability} \label{sec:bnds}

In this section the bounds from Theorem \ref{dnbounds} are investigated and verified numerically, to see how tight the bounds are for inclusions of various sizes. Here it is important to determine eigenvalues of the non-concentric problem $\Lambda(\gamma_{C,R})-\Lambda(1)$ accurately. Therefore, we will avoid numerical solution of $f\mapsto \Lambda(\gamma_{C,R})f$ as well as numerical integration, as integration of high frequent trigonometric-like functions requires many sampling points for a usual Gauss-Legendre quadrature rule to be accurate. Instead we will use an orthonormal basis $\{\phi_n\}_{n\in\rum{Z}}$ in terms of the inner product $\inner{\cdot,\cdot}_{1/2}$ from \eqref{eq:halfnorm}, and determine the coefficients
\begin{equation*}
	\mathcal{A}_{n,m} \equiv \inner{(\Lambda(\gamma_{C,R})-\Lambda(1))\phi_m,\phi_n}_{1/2}
\end{equation*}
exactly, based on the known spectrum of the concentric problem $\Lambda(\gamma_{0,r})-\Lambda(1)$ and the transformation $\mathcal{M}_a$ that takes $B_{0,r}$ to $B_{C,R}$. As the basis is orthonormal the infinite dimensional matrix $\mathcal{A}$ is then a matrix representation of $\Lambda(\gamma_{C,R})-\Lambda(1)$. This is understood in the sense that for $f\in H^{1/2}(\partial\rum{D})$ where we write $f = \sum_{m\in\rum{Z}}v_m\phi_m$ with the coefficients $v_m \equiv \inner{f,\phi_m}_{1/2}$ collected in a sequence $v$, then the $n$'th component of $\mathcal{A}v$ is by linearity of $\Lambda(\gamma_{C,R})-\Lambda(1)$ and the inner product given by
\begin{align*}
	(\mathcal{A}v)_n &= \sum_{m\in\rum{Z}}\inner{f,\phi_m}_{1/2}\inner{(\Lambda(\gamma_{C,R})-\Lambda(1))\phi_m,\phi_n}_{1/2} \\
	&= \inner{(\Lambda(\gamma_{C,R})-\Lambda(1))f,\phi_n}_{1/2}.
\end{align*}
Thus $\mathcal{A}$ maps the basis coefficients for $f$ to the corresponding basis coefficients of $(\Lambda(\gamma_{C,R})-\Lambda(1))f$. Furthermore, $\mathcal{A}$ has the same eigenvalues as $\Lambda(\gamma_{C,R})-\Lambda(1)$, and the eigenvectors of $\mathcal{A}$ comprise the basis coefficients for the eigenfunctions of $\Lambda(\gamma_{C,R})-\Lambda(1)$. In practice we can only construct an $N$-term approximation $\mathcal{A}_N$ using the finite $\{\phi_n\}_{\abs{n}\leq N}$ set of basis functions. Such a matrix is a representation of the operator
\[
P_N(\Lambda(\gamma_{C,R})-\Lambda(1))P_N, 
\]
where $P_N$ is an orthogonal projection onto $\mspan\{\phi_n\}_{\abs{n}\leq N}$ in terms of the $\inner{\cdot,\cdot}_{1/2}$-inner product. For compact operators it is known from spectral theory (cf.\ \cite{Osborn1975,Kato1995}) that eigenvalues and eigenfunctions of such $N$-term approximations converge as $N\to\infty$. From Figure \ref{fig:fig3} it is evident that it is possible to estimate the correct eigenvalues to machine precision using very small $N$ if the basis $\{\phi_n\}_{n\in\rum{Z}}$ is well-chosen.

Let $f_n(\theta) \equiv \frac{1}{\sqrt{2\pi}}e^{in\theta}$ be the usual Fourier basis for $L^2(\partial\rum{D})$. Since $\{f_n\}_{n\in\rum{Z}}$ is an orthonormal basis in the usual $L^2(\partial\rum{D})$-inner product, it follows straightforwardly that $\phi_n \equiv \mathcal{M}_a f_n$ gives an orthonormal basis in the $\inner{\cdot,\cdot}_{1/2}$-inner product. It is a consequence of Proposition \ref{prop:list} and that $\mathcal{M}_a$ is bounded; by picking $f\in L^2(\partial\rum{D})$ then $\mathcal{M}_a f\in L^2(\partial\rum{D})$ so
\begin{equation*}
	\mathcal{M}_a f= \sum_{n\in\rum{Z}}\inner{\mathcal{M}_a f,f_n}f_n = \sum_{n\in\rum{Z}} \inner{f,\phi_n}_{1/2}f_n \Rightarrow f = \sum_{n\in\rum{Z}} \inner{f,\phi_n}_{1/2}\phi_n.
\end{equation*}

\begin{theorem} \label{dnmat} 
	Let $\hat{\lambda}_n$ be the $n$'th eigenvalue of $\Lambda(\gamma_{0,r})-\Lambda(1)$ (cf.\ Proposition~\ref{prop:eigval}). Define the orthonormal basis $\{\phi_n\}_{n\in\rum{Z}}$ by
	\begin{equation*}
		\phi_n \equiv \mta f_n, \quad f_n(\theta) \equiv \frac{1}{\sqrt{2\pi}}e^{in\theta}, \enskip n\in\rum{Z}. 
	\end{equation*}
	Then $\Lambda(\gamma_{C,R})-\Lambda(1)$ is represented in this basis via the following tridiagonal matrix:
	\begin{equation*}
		\mathcal{A}_{m,n} \equiv \inner{(\Lambda(\gamma_{C,R})-\Lambda(1))\phi_m,\phi_n}_{1/2} = \begin{cases}
		\tfrac{1+\rho^2}{1-\rho^2}\hat{\lambda}_m, & m=n, \\
		\tfrac{-a}{1-\rho^2}\hat{\lambda}_m, & m-n=1, \\
		\tfrac{-\overline{a}}{1-\rho^2}\hat{\lambda}_m, & m-n=-1, \\
		0, & \textup{else}.
		\end{cases}
	\end{equation*}
\end{theorem}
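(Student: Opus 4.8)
The plan is to transport the whole problem back to the concentric inclusion, where the Fourier modes $f_m$ are genuine eigenfunctions, and then to reduce the matrix entry to a single weighted boundary integral. First, the transformation rule \eqref{dnmaptransform} applied to the homogeneous conductivity (where $\mta 1 = 1$, so the inclusion disappears) gives the invariance $\Lambda(1) = J_a^{1/2}\mta\Lambda(1)\mta$; subtracting this from \eqref{dnmaptransform} yields
\[
\Lambda(\gamma_{C,R})-\Lambda(1) = J_a^{1/2}\mta\bigl(\Lambda(\gamma_{0,r})-\Lambda(1)\bigr)\mta.
\]
Applying this operator to $\phi_m = \mta f_m$ and using that $\mta$ is an involution (Proposition~\ref{prop:list}(ii)) to cancel the two inner factors $\mta\mta = \ident$, together with the fact that $f_m$ is an eigenfunction of $\Lambda(\gamma_{0,r})-\Lambda(1)$ with eigenvalue $\hat{\lambda}_m$ (Proposition~\ref{prop:eigval}), reduces the action to
\[
\bigl(\Lambda(\gamma_{C,R})-\Lambda(1)\bigr)\phi_m = \hat{\lambda}_m\,J_a^{1/2}\mta f_m = \hat{\lambda}_m\,J_a^{1/2}\phi_m.
\]
Consequently $\mathcal{A}_{m,n} = \hat{\lambda}_m\inner{J_a^{1/2}\phi_m,\phi_n}_{1/2}$, and the whole statement reduces to evaluating this scalar.

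Next I would unfold the weighted inner product. By \eqref{eq:halfnorm} we have $\inner{J_a^{1/2}\phi_m,\phi_n}_{1/2} = \int_{\partial\rum{D}} J_a\,\phi_m\overline{\phi_n}\,ds$. Writing the Fourier modes as $f_m(x) = \tfrac{1}{\sqrt{2\pi}}x^m$ on $\partial\rum{D}$, one has $\phi_m\overline{\phi_n} = \mta(f_m\overline{f_n}) = \tfrac{1}{2\pi}M_a(\cdot)^{m-n}$. I would then change variables $y = M_a(x)$; since $M_a$ is an involution with $J_a(x) = J_a^{-1}(y)$ and boundary measure $ds(x) = J_a^{1/2}(y)\,ds(y)$ (both following from Proposition~\ref{prop:list}(iii) and the boundary Jacobian in \eqref{eq:jacdetbdry}), the substitution collapses the integral to $\tfrac{1}{2\pi}\int_{\partial\rum{D}} J_a^{-1/2}(y)\,y^{m-n}\,ds(y)$. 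The decisive simplification is that by \eqref{eq:jacdom} the surviving weight is the degree-one trigonometric polynomial
\[
J_a^{-1/2}(y) = \frac{\abs{\overline{a}y-1}^2}{1-\rho^2} = \frac{1+\rho^2-\overline{a}y-a\overline{y}}{1-\rho^2},
\]
which couples $y^{m-n}$ only to the modes $m-n\in\{-1,0,1\}$.

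Finally, integrating $y^{m-n}$ against this weight and using $\int_0^{2\pi} e^{ij\phi}\,d\phi = 2\pi\delta_{j,0}$ leaves only three terms: the constant $1+\rho^2$ selects $m=n$, the term $-\overline{a}y$ selects $m-n=-1$, and the term $-a\overline{y}$ selects $m-n=1$. Dividing by $1-\rho^2$ and multiplying by $\hat{\lambda}_m$ reproduces exactly the claimed diagonal, super- and sub-diagonal entries, with all other entries vanishing, which establishes tridiagonality. I expect the one delicate point to be the boundary change of variables: one must track correctly how the weight $J_a(x)$ combines with the Jacobian factor hidden in $ds$ so that precisely $J_a^{-1/2}(y)$ survives — getting this power wrong would destroy the band structure. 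Once that bookkeeping is done, the result follows immediately from orthogonality of the Fourier basis.
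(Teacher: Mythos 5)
Your proposal is correct and follows essentially the same route as the paper: transport to the concentric operator via Lemma~\ref{dntrans}, use that $f_m$ is an eigenfunction with eigenvalue $\hat{\lambda}_m$, and reduce the matrix entry to the Fourier coefficients of $J_a^{-1/2} = \tfrac{1+\rho^2-\overline{a}e^{i\theta}-ae^{-i\theta}}{1-\rho^2}$, which produce exactly the tridiagonal band. The only cosmetic difference is that you perform the reduction by an explicit boundary change of variables $y=M_a(x)$ (correctly tracking $J_a(x)=J_a^{-1}(y)$ and $ds(x)=J_a^{1/2}(y)\,ds(y)$), whereas the paper unwinds the weighted inner product using the adjoint identity $\mta^*=J_a^{1/2}\mta$; both land on the same integral $\tfrac{\hat{\lambda}_m}{2\pi}\int_{\partial\rum{D}}J_a^{-1/2}e^{i(m-n)\theta}\,ds$.
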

\begin{proof}
	Utilizing Lemma \ref{dntrans} and Proposition \ref{prop:list} (and that $\Lambda(\mta 1) = \Lambda(1)$):
	\begin{align*}
		\inner{(\Lambda(\gamma_{C,R})-\Lambda(1))\phi_m,\phi_n}_{1/2} &= \inner{J_a^{1/2}\mta (\Lambda(\gamma_{0,r})-\Lambda(1)) \mta\mta f_m,J_a^{1/2}\mta f_n} \\
		&= \inner{(\Lambda(\gamma_{0,r})-\Lambda(1)) f_m,J_a^{-1/2}f_n}.
	\end{align*}
	Now using that $f_m$ is an eigenfunction of $\Lambda(\gamma_{0,r})-\Lambda(1)$ and the expression \eqref{eq:jacdetbdry} for $J_a^{1/2}|_{\partial\rum{D}}$
	\begin{align*}
		\inner{(\Lambda(\gamma_{C,R})-\Lambda(1))\phi_m,\phi_n}_{1/2} &= \frac{1}{2\pi}\inner{(\Lambda(\gamma_{0,r})-\Lambda(1)) e^{im\theta}, J_a^{-1/2}e^{in\theta}} \\
		&= \frac{\hat{\lambda}_m}{2\pi}\inner{e^{im\theta}, J_a^{-1/2}e^{in\theta}} \\
		&= \frac{\hat{\lambda}_m}{2\pi(1-\rho^2)}\int_{0}^{2\pi} e^{i(m-n)\theta}(1+\rho^2-2\rho\cos(\theta-\sang))d\theta \\
		&= \begin{cases}
		\tfrac{1+\rho^2}{1-\rho^2}\hat{\lambda}_m, & m=n, \\
		\tfrac{-a}{1-\rho^2}\hat{\lambda}_m, & m-n=1, \\
		\tfrac{-\overline{a}}{1-\rho^2}\hat{\lambda}_m, & m-n=-1, \\
		0, & \textup{else}.
		\end{cases}
	\end{align*}
	So the above calculation gives the matrix representation.
\end{proof}

\begin{figure}[htb]
\centering
\subfloat[\label{fig:fig3a}]{\includegraphics[width=.49\textwidth]{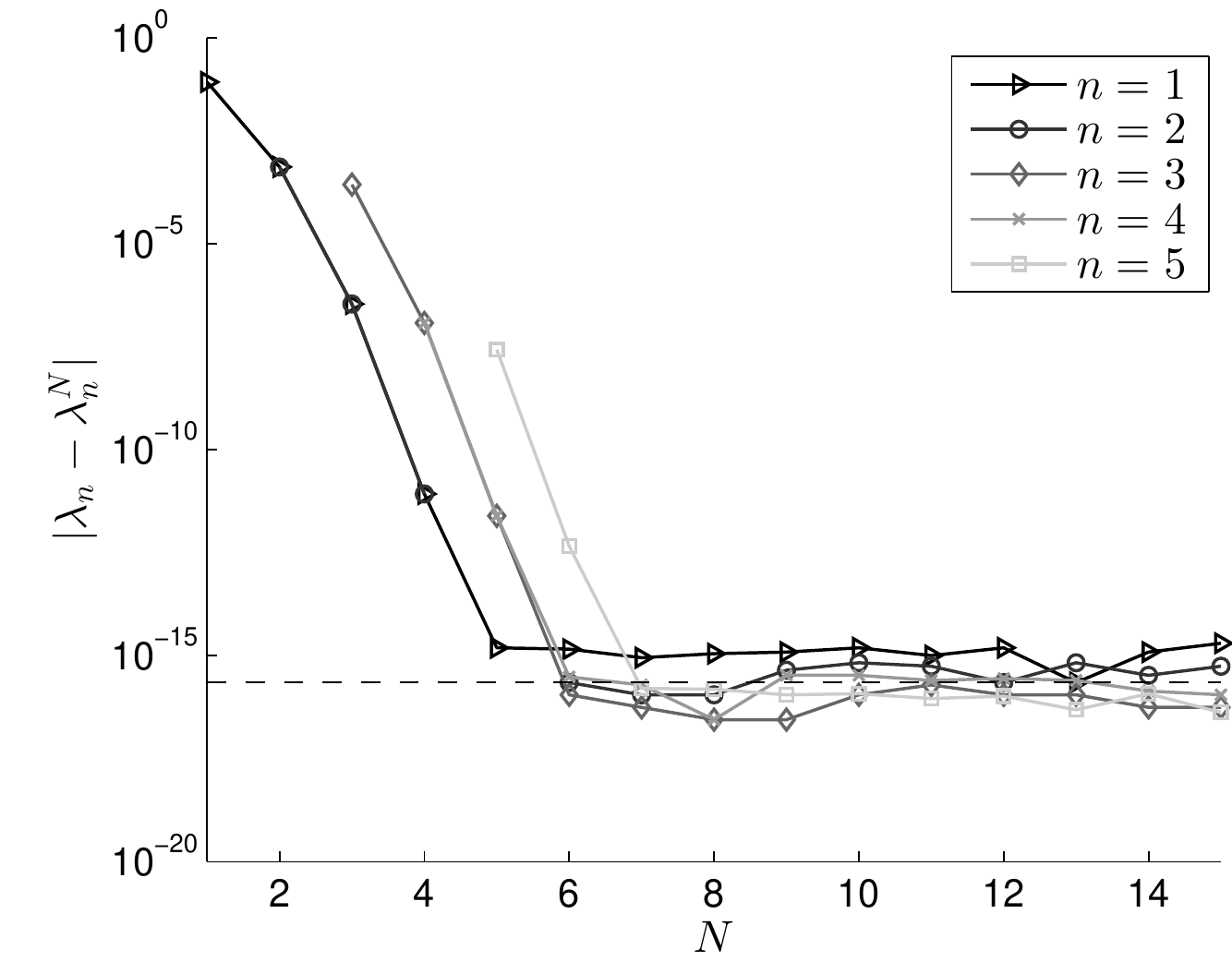}} 
\hfill
\subfloat[\label{fig:fig3b}]{\includegraphics[width=.49\textwidth]{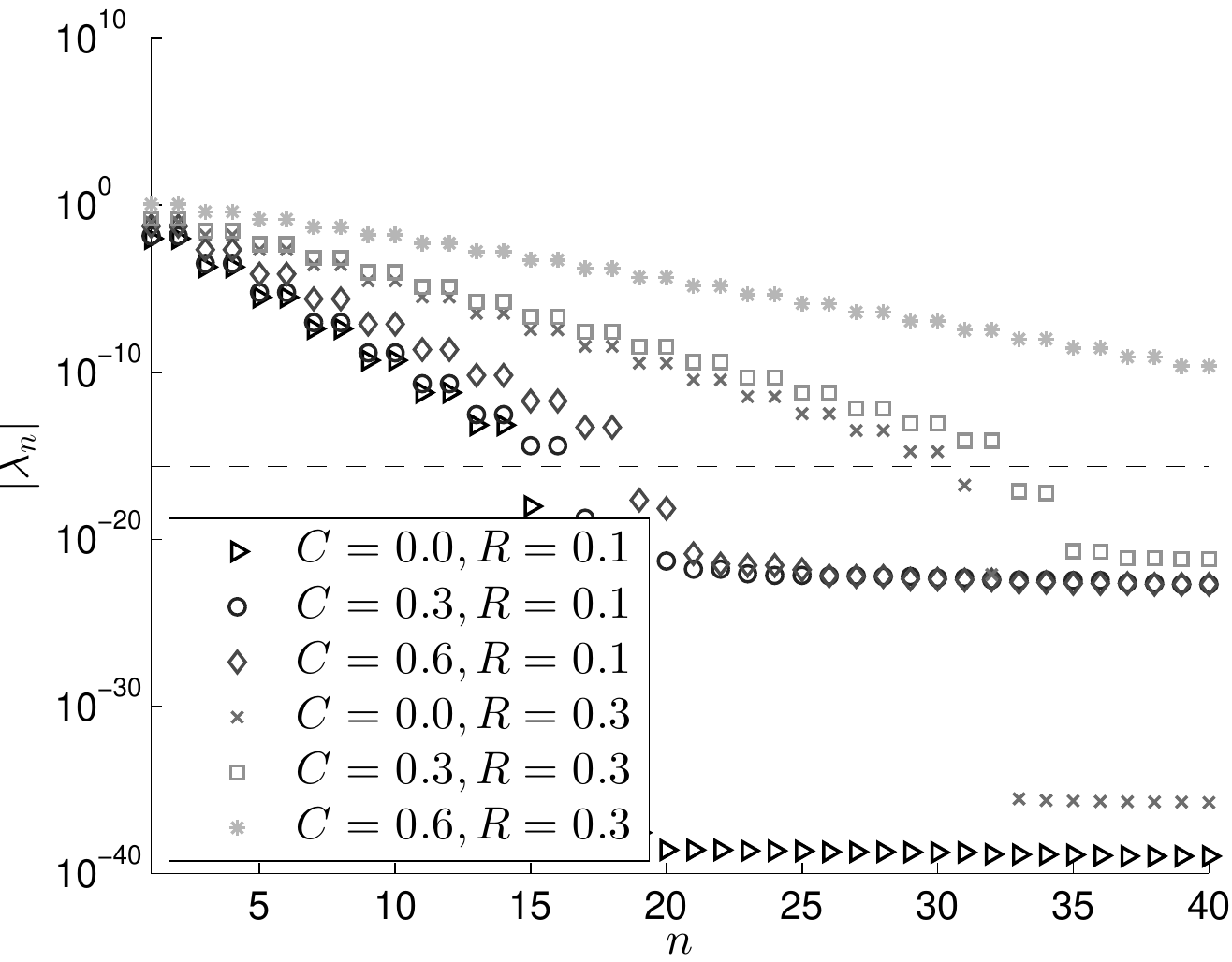}} 
\caption{\textbf{(a):} Difference $\abs{\lambda_n-\lambda_n^N}$ between the $n=1,2,\dots,5$ largest eigenvalues $\lambda_n$ of $\Lambda(\gamma_{C,R})-\Lambda(1)$ with $C = 0.7$ and $R = 0.2$, and the eigenvalues $\lambda_n^N$ of the $N$-term approximation $\mathcal{A}_N$ from Theorem \ref{dnmat}. \textbf{(b):} Largest $n=1,2,\dots,40$ eigenvalues of $\Lambda(\gamma_{C,R})-\Lambda(1)$ for various values of $C$ and $R$, estimated to machine precision (dashed line).}
\label{fig:fig3}
\end{figure}

The basis functions in Theorem \ref{dnmat} can explicitly be given in terms of $\theta$. Since $M_a:\partial\rum{D}\to\partial\rum{D}$ then the angular variable $\theta$ is mapped to another angular variable $\psi_a(\theta)$, thus
\begin{equation*}
	\phi_n(\theta) = \frac{1}{\sqrt{2\pi}}e^{in\psi_a(\theta)} = \frac{1}{\sqrt{2\pi}}M_a (e^{i\theta})^n = \frac{1}{\sqrt{2\pi}}\left(\frac{e^{i\theta}-\rho e^{i\sang}}{\rho e^{i(\theta-\sang)}-1}\right)^n, \enskip n\in\rum{Z}. 
\end{equation*}
\begin{remark}
	The matrix $\mathcal{A}$ in Theorem \ref{dnmat} is not Hermitian as $\Lambda(\gamma_{C,R})-\Lambda(1)$ is only self-adjoint in the regular $L^2(\partial\rum{D})$-inner product, and not in the weighted $\inner{\cdot,\cdot}_{1/2}$-inner product.
\end{remark}

The ratio of the norms in Theorem \ref{dnbounds} have negligible dependence with respect to the amplitude $A$, compared to the radius $r$ (note also that $\rho$ in the bounds are independent of $A$). This can also be seen in terms of the Fr\'echet derivate of $\gamma\mapsto \Lambda(\gamma)$:
\begin{equation*}
	\frac{\norm{\Lambda(1+A\chi_{B_{0,r}})-\Lambda(1)}}{\norm{\Lambda(1+A\chi_{B_{C,R}})-\Lambda(1)}} = \frac{\norm{\Lambda'(1)\chi_{B_{0,r}}+ o(A)/A}}{\norm{\Lambda'(1)\chi_{B_{C,R}}+ o(A)/A}} \xrightarrow[A\to 0]{} \frac{\norm{\Lambda'(1)\chi_{B_{0,r}}}}{\norm{\Lambda'(1)\chi_{B_{C,R}}}}.
\end{equation*}
Therefore $A$ will be kept fixed $A = 2$ in the following examples.

\begin{figure}[htb]
\centering
\includegraphics[width=.6\textwidth]{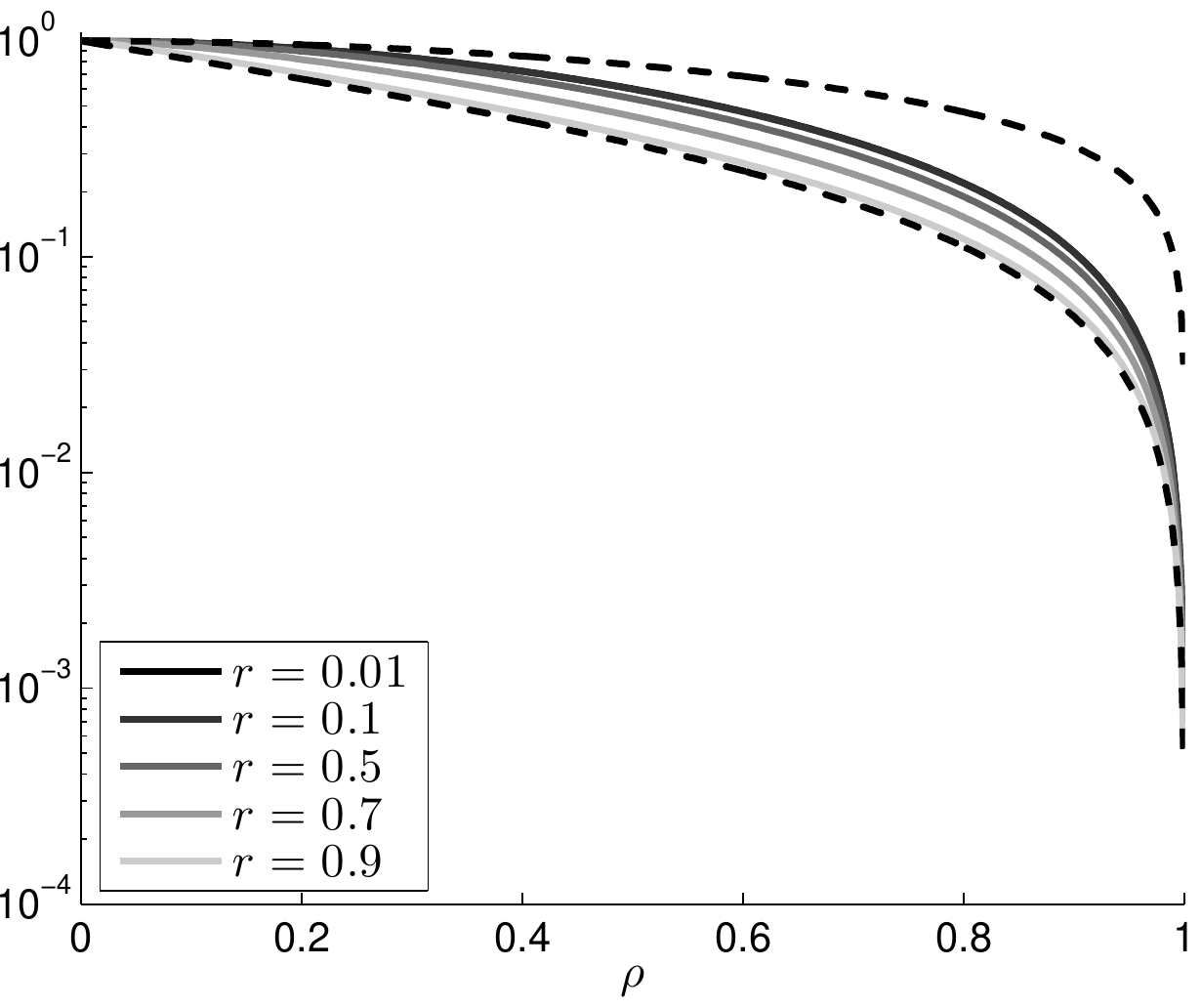}
\caption{Ratio $\norm{\Lambda(\gamma_{0,r})-\Lambda(1)}/\norm{\Lambda(\gamma_{C,R})-\Lambda(1)}$ for $\abs{a} = \rho\in[0,1)$ where $R$ and $C$ are determined from $r$ and $\rho$ by Proposition \ref{prop:mobdisk}, along with the bounds (dashed lines) from Theorem \ref{dnbounds}.}
\label{fig:fig4}
\end{figure}

\begin{figure}[htb]
\centering
\subfloat[\label{fig:fig5a}]{\includegraphics[width=.49\textwidth]{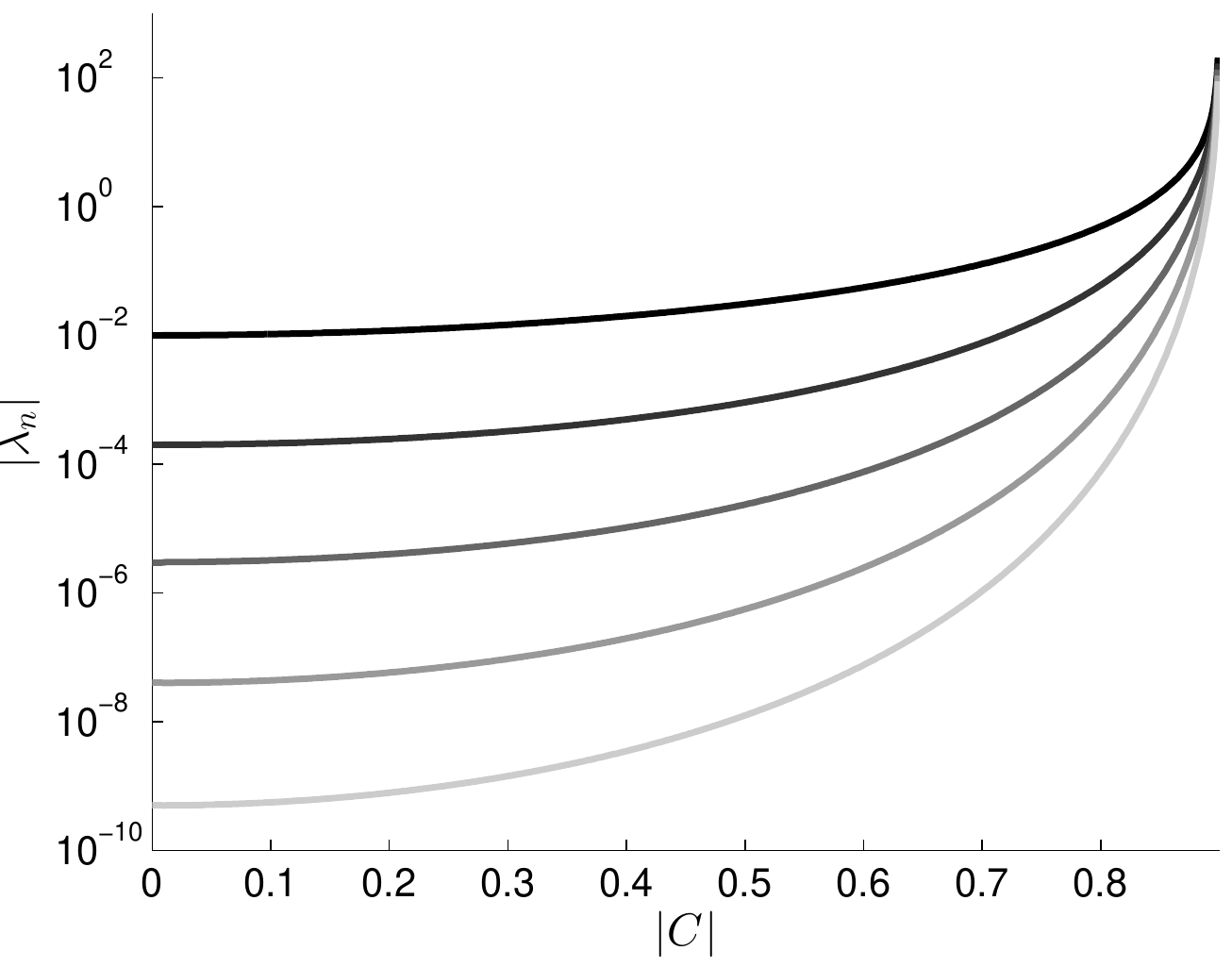}} 
\hfill
\subfloat[\label{fig:fig5b}]{\includegraphics[width=.49\textwidth]{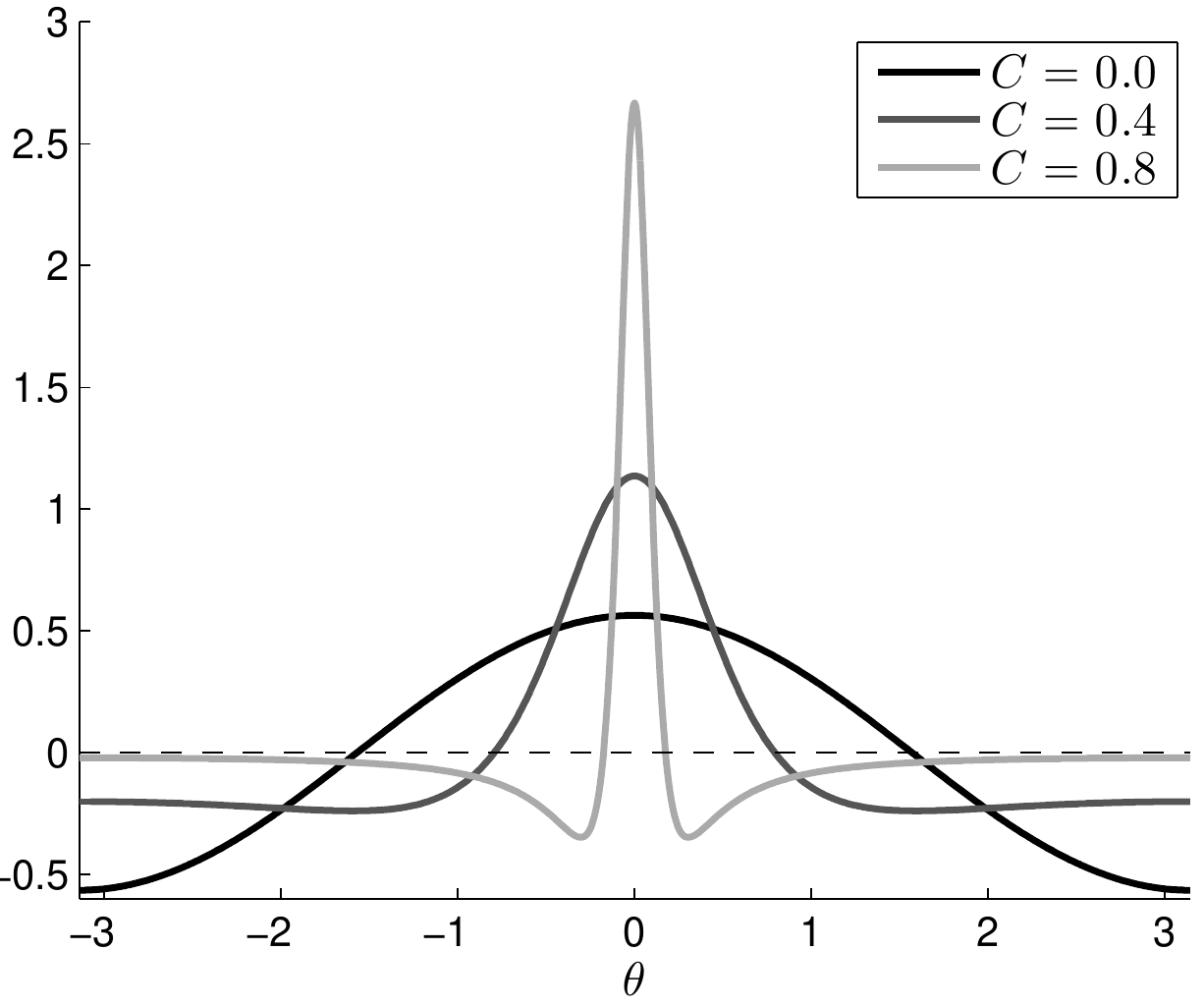}} 
\caption{\textbf{(a):} 10 largest eigenvalues $\lambda_n$ (each with multiplicity 2) of $\Lambda(\gamma_{C,R})-\Lambda(1)$ with fixed $R = 0.1$ and $0\leq \abs{C}< 1-R$. \textbf{(b):} Eigenfunction $f(\theta)$ (normalized in $\norm{\cdot}$) corresponding to the largest eigenvalue of $\Lambda(\gamma_{C,R})-\Lambda(1)$ for fixed $R = 0.1$ and various values of $C$.}
\label{fig:fig5}
\end{figure}

Figure \ref{fig:fig4} shows that for large inclusions with $r$ close to $1$ the lower bound of Theorem \ref{dnbounds} comes reasonably close, while for small inclusions with $r$ close to $0$ the upper bound is quite tight for $\rho<0.3$ (meaning inclusions close to the centre). It appears that as $r\to 0$ the distinguishability approaches a fixed curve (the curves for $r=0.1$ and $r=0.01$ are indistinguishable in the figure, and even $r=0.5$ is quite close), lying in the middle of the lower and upper bounds.

The depth dependence of EIT is further exemplified in Figure \ref{fig:fig5a} where the eigenvalues of $\Lambda(\gamma_{C,R})-\Lambda(1)$ are shown for a fixed radius $R = 0.1$ as increasing functions of the centre $|C|$. Furthermore, the eigenfunction for the largest eigenvalue is shown in Figure \ref{fig:fig5b}, and how it changes from a cosine to a very localized function as the inclusion is moved closer to the boundary. The eigenfunctions corresponding to the largest eigenvalues are the best choice of (orthonormal) boundary conditions in practice, as they maximize the distinguishability. Therefore reconstruction is expected to be more noise robust when using the eigenfunctions in the measurements. So from the behaviour in Figure \ref{fig:fig5b} it is not surprising that it is possible to numerically obtain very reasonable local reconstructions in the case of partial data (where only part of the boundary is accessible), close to the measured boundary \cite{Garde_2015,Knudsen_2015}.

\section{Conclusions} \label{sec:conc}

We have characterized the Dirichlet-to-Neumann map for ball inclusions in the unit disk (and for the Neumann-to-Dirichlet map, cf. Appendix \ref{sec:appB}), and have shown explicit lower and upper bounds on how much the distance of the inclusions to the boundary affects the operator norms. The bounds show a distinct depth dependence that can be utilized in numerical reconstruction, for instance by spatially varying regularization.

It is not known if the bounds are optimal, however through several examples it is demonstrated that the bounds accurately predict the change in distinguishability. To verify the bounds and test their tightness numerically, exact matrix representations of the boundary operators are derived, where the matrix elements are given explicitly without the need for numerical integration or solution of PDEs.

The analysis was restricted to the 2D case, though it is natural to consider if the same bounds hold for the 3D unit ball. However, in higher dimensions $d\geq 3$ the harmonic morphisms only include orthogonal transformations and translation, while M\"obius transformations generally preserve the $d$-Laplacian \cite{Manfredi_1994}. For this reason there is not a straightforward extension to 3D. 

\appendix

\section{A monotonicity property of the DN map} \label{sec:appA}

The results in this appendix are given for completeness due to a lack of proper reference.

For the Neumann-to-Dirichlet map a similar monotonicity relation as below is well-known and is used in reconstruction algorithms \cite{GardeStaboulis_2016,Harrach13,Harrach15}, where the right hand-side inequality is "flipped". In both cases of DN and ND maps the proof boils down to an application of a generalized Dirichlet principle.
\begin{lemma} \label{prop:monodn}
	Let $\gamma_1,\gamma_2\in L^\infty_+(\Omega)$ be real-valued, then
	\begin{equation*}
		\gamma_1 \leq \gamma_2 \text{ a.e.\ in } \Omega \quad\text{ implies }\quad  \inner{\Lambda(\gamma_1)f,f} \leq \inner{\Lambda(\gamma_2)f,f}, \enskip \forall f\in H^{1/2}(\partial\Omega).
	\end{equation*}
\end{lemma}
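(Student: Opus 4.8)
The plan is to exploit the variational (Dirichlet) characterization of the quadratic form $f\mapsto\inner{\Lambda(\gamma)f,f}$. The first step is to record the energy identity: if $u_\gamma\in H^1(\Omega)$ solves \eqref{eq:condeq} weakly with $u_\gamma|_{\partial\Omega}=f$, then Green's identity together with $\nabla\cdot(\gamma\nabla u_\gamma)=0$ gives
\[
	\inner{\Lambda(\gamma)f,f} = \int_{\partial\Omega}(\nu\cdot\gamma\nabla u_\gamma)\,f\,ds = \int_\Omega \gamma\abs{\nabla u_\gamma}^2\,dx.
\]
Here the boundary pairing is read in the $H^{-1/2}(\partial\Omega)$--$H^{1/2}(\partial\Omega)$ duality, and the identity is valid for merely $L^\infty_+$ coefficients once the conductivity equation is understood in its weak form $\int_\Omega \gamma\nabla u_\gamma\cdot\nabla w\,dx = 0$ for all $w\in H_0^1(\Omega)$.

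Next I would establish the generalized Dirichlet principle: among all competitors $v\in H^1(\Omega)$ with $v|_{\partial\Omega}=f$, the solution $u_\gamma$ minimizes the energy $\int_\Omega \gamma\abs{\nabla v}^2\,dx$. This follows by writing an arbitrary competitor as $v=u_\gamma+w$ with $w\in H_0^1(\Omega)$ and expanding
\[
	\int_\Omega \gamma\abs{\nabla v}^2\,dx = \int_\Omega \gamma\abs{\nabla u_\gamma}^2\,dx + 2\int_\Omega \gamma\nabla u_\gamma\cdot\nabla w\,dx + \int_\Omega \gamma\abs{\nabla w}^2\,dx;
\]
the cross term vanishes by the weak formulation, and the last term is nonnegative since $\gamma>0$ a.e., so the minimum is attained at $u_\gamma$ and equals $\inner{\Lambda(\gamma)f,f}$ by the energy identity above.

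With these two ingredients the monotonicity is immediate. Let $u_1$ and $u_2$ denote the solutions for $\gamma_1$ and $\gamma_2$ with the shared boundary data $f$. Using $u_2$ as an admissible competitor in the minimization for $\gamma_1$, and then the pointwise bound $\gamma_1\leq\gamma_2$ against the nonnegative integrand $\abs{\nabla u_2}^2$, I obtain
\[
	\inner{\Lambda(\gamma_1)f,f} = \int_\Omega \gamma_1\abs{\nabla u_1}^2\,dx \leq \int_\Omega \gamma_1\abs{\nabla u_2}^2\,dx \leq \int_\Omega \gamma_2\abs{\nabla u_2}^2\,dx = \inner{\Lambda(\gamma_2)f,f}.
\]

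There is no serious obstacle here; the only points requiring care are technical. One must justify the energy identity and the variational principle for non-smooth $L^\infty_+$ conductivities by working consistently with weak formulations rather than pointwise manipulations of the PDE, and one must confirm that $u_2$ is a legitimate competitor in the $\gamma_1$-problem, which holds simply because both solutions share the trace $f$ on $\partial\Omega$. The essential mechanism---that lowering the conductivity can only lower the Dirichlet energy, while feeding a non-optimal competitor into the minimization can only raise it---is precisely the generalized Dirichlet principle alluded to before the statement.
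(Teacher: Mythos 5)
Your proof is correct and follows essentially the same route as the paper: the energy identity from the weak formulation, the generalized Dirichlet principle obtained by expanding $\int_\Omega\gamma\abs{\nabla(u+w)}^2\,dx$ with the cross term vanishing, and then the chain of inequalities using $u_2$ as a competitor for the $\gamma_1$-minimization together with $\gamma_1\leq\gamma_2$. No gaps.
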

	\begin{proof}
		From the weak form of the continuum model then for any $\gamma\in L^\infty_+(\Omega)$ we have
		\begin{equation*}
			\inner{\Lambda(\gamma)f,h} = \int_{\Omega} \gamma \nabla u\cdot \overline{\nabla v}\,dx, \enskip \forall v\in H^1(\partial \Omega), v|_{\partial\Omega} = h,
		\end{equation*}
		in particular
		\begin{equation}
			\int_{\Omega} \gamma \nabla u\cdot \overline{\nabla v}\,dx = 0,\enskip \forall v\in H_0^1(\partial\Omega). \label{eq:weakh01}
		\end{equation}
		So for $v\in H_0^1(\partial\Omega)$ then \eqref{eq:weakh01} implies
		\begin{align*}
			\int_{\Omega} \gamma \abs{\nabla (u+v)}^2\,dx &= \int_{\Omega} \gamma \left( \abs{\nabla u}^2 + \abs{\nabla v}^2 + \nabla u\cdot \overline{\nabla v} + \nabla v\cdot \overline{\nabla u} \right)\, dx \\
			&= \int_{\Omega} \gamma \left( \abs{\nabla u}^2 + \abs{\nabla v}^2\right)\, dx,
		\end{align*}
		or rather
		\begin{equation}
			\inner{\Lambda(\gamma)f,f} = \int_{\Omega}\gamma \abs{\nabla u}^2\, dx =  \inf \left\{\int_{\Omega} \gamma \abs{\nabla w}^2\,dx : w\in H^1(\Omega), w|_{\partial\Omega} = f \right\}. \label{eq:lammin}
		\end{equation}
		So for any boundary potential $f\in H^{1/2}(\partial\Omega)$, and with $u_1$ being the solution to \eqref{eq:condeq} for $\gamma_1$ and $u_2$ the solution for $\gamma_2$. Then using $\gamma_1\leq \gamma_2$ in $\Omega$, and the minimizing property \eqref{eq:lammin}
		\begin{equation*}
			\inner{\Lambda(\gamma_1)f,f} = \int_{\Omega} \gamma_1\abs{\nabla u_1}^2\,dx \leq \int_{\Omega} \gamma_1\abs{\nabla u_2}^2\,dx \leq \int_{\Omega} \gamma_2\abs{\nabla u_2}^2\,dx = \inner{\Lambda(\gamma_2)f,f}.
		\end{equation*}	
	\end{proof}
	
	This leads to the very intuitive conclusion that larger inclusions gives larger distinguishability.
	
	\begin{corollary} \label{coro:mono}
		Let $A>-1$ and $D_1 \subseteq D_2 \subset \tilde{\Omega}$, where $\tilde{\Omega}\subsetneq\Omega$ such that $\dist(\tilde{\Omega},\partial\Omega)>0$, then
		\begin{equation*}
			\norm{\Lambda(1+A\chi_{D_1})-\Lambda(1)} \leq \norm{\Lambda(1+A\chi_{D_2})-\Lambda(1)}.
		\end{equation*}
	\end{corollary}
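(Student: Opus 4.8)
The plan is to reduce the statement to the monotonicity principle of Lemma \ref{prop:monodn}, exploiting that the difference operator $\Lambda(1+A\chi_{D})-\Lambda(1)$ is self-adjoint (Lemma \ref{lemma:DNL2bnd} handles the ball case, and the same smoothing argument applies for any $D$ compactly contained in $\Omega$) so that its norm equals the supremum of the absolute value of its quadratic form. First I would split into the two cases $A>0$ and $-1<A<0$, since the sign of $A$ controls the ordering of the conductivities. In the case $A>0$, the inclusion $D_1\subseteq D_2$ gives $1 \leq 1+A\chi_{D_1} \leq 1+A\chi_{D_2}$ pointwise a.e.\ in $\Omega$, and Lemma \ref{prop:monodn} then yields the chain of quadratic-form inequalities
\begin{equation*}
	0 \leq \inner{(\Lambda(1+A\chi_{D_1})-\Lambda(1))f,f} \leq \inner{(\Lambda(1+A\chi_{D_2})-\Lambda(1))f,f}, \quad \forall f\in H^{1/2}(\partial\Omega).
\end{equation*}
Because both difference operators are positive semidefinite here, each operator norm equals the supremum of its quadratic form over $\norm{f}=1$, and the pointwise domination of the forms transfers directly to domination of the suprema, giving the desired inequality.

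For the case $-1<A<0$ the conductivities are ordered the other way, $1+A\chi_{D_2}\leq 1+A\chi_{D_1}\leq 1$, so Lemma \ref{prop:monodn} now produces
\begin{equation*}
	\inner{(\Lambda(1+A\chi_{D_2})-\Lambda(1))f,f} \leq \inner{(\Lambda(1+A\chi_{D_1})-\Lambda(1))f,f} \leq 0, \quad \forall f\in H^{1/2}(\partial\Omega).
\end{equation*}
The difference operators are now negative semidefinite, so taking absolute values reverses the inequality and again yields $\abs{\inner{(\Lambda(1+A\chi_{D_1})-\Lambda(1))f,f}} \leq \abs{\inner{(\Lambda(1+A\chi_{D_2})-\Lambda(1))f,f}}$; passing to the supremum over the unit sphere gives the norm bound. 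In both cases the key point is that the quadratic forms do not change sign, so that the operator norm is recovered cleanly as the extreme value of the form rather than requiring control over both the largest and the smallest eigenvalue separately.

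The step I expect to require the most care is justifying that the density argument is legitimate: Lemma \ref{prop:monodn} and the variational identity \eqref{eq:lammin} are stated for $f\in H^{1/2}(\partial\Omega)$, whereas the operator norm in the statement is taken over $L^2(\partial\Omega)$ after the continuous extension of Lemma \ref{lemma:DNL2bnd}. I would therefore note that since each difference operator extends continuously to a bounded self-adjoint operator on $L^2(\partial\Omega)$, and $H^{1/2}(\partial\Omega)$ is dense in $L^2(\partial\Omega)$, the quadratic form $f\mapsto\inner{(\Lambda(1+A\chi_{D})-\Lambda(1))f,f}$ extends continuously, the form inequalities persist on the dense subspace and hence on all of $L^2(\partial\Omega)$ by continuity, and the operator norm of a bounded self-adjoint operator equals the supremum of the absolute value of its quadratic form over the $L^2$-unit sphere. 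The only genuine subtlety is confirming that the domination of quadratic forms—rather than of the operators directly—suffices to dominate the norms, which is exactly why isolating the definite-sign structure in each case of $A$ is essential.
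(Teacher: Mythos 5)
Your proposal is correct and follows essentially the same route as the paper: apply Lemma \ref{prop:monodn} to get sign-definite, pointwise-ordered quadratic forms in the two cases $A>0$ and $-1<A<0$, then pass to the operator norm via the quadratic-form characterization for self-adjoint operators and density of $H^{1/2}(\partial\Omega)$ in $L^2(\partial\Omega)$. The only (trivial) omission is the case $A=0$, which the paper dispatches as immediate.
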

	\begin{proof}
		The case $A\equiv 0$ is trivial. Let $A > 0$ then by Lemma \ref{prop:monodn}
		\begin{align*}
			0 &= \inner{(\Lambda(1)-\Lambda(1))f,f} \\
			&\leq \inner{(\Lambda(1+A\chi_{D_1})-\Lambda(1))f,f} \\
			&\leq \inner{(\Lambda(1+A\chi_{D_2})-\Lambda(1))f,f}, \enskip\forall f\in H^{1/2}(\partial\Omega),
		\end{align*}
		and similarly if $-1<A<0$:
		\begin{align*}
			0 &= \inner{(\Lambda(1)-\Lambda(1))f,f} \\
			&\geq \inner{(\Lambda(1+A\chi_{D_1})-\Lambda(1))f,f} \\
			&\geq \inner{(\Lambda(1+A\chi_{D_2})-\Lambda(1))f,f}, \enskip\forall f\in H^{1/2}(\partial\Omega).
		\end{align*}
		Thus for any $A > -1$: 
		\begin{equation}
			\abs{\inner{(\Lambda(1+A\chi_{D_1})-\Lambda(1))f,f}} \leq \abs{\inner{(\Lambda(1+A\chi_{D_2})-\Lambda(1))f,f}}, \enskip \forall f\in H^{1/2}(\partial\Omega). \label{eq:monodiff}
		\end{equation}
		Then the claim follows directly from \eqref{eq:monodiff} and that $H^{1/2}(\partial\Omega)$ is dense in $L^2(\partial\Omega)$
		\begin{align*}
			\norm{\Lambda(1+A\chi_{D_1})-\Lambda(1)} &= \sup_{f\in H^{1/2}(\partial\Omega)\setminus\{0\}}\frac{\abs{\inner{(\Lambda(1+A\chi_{D_1})-\Lambda(1))f,f}}}{\norm{f}_{L^2(\partial\Omega)}^2}  \\
			&\leq \sup_{f\in H^{1/2}(\partial\Omega)\setminus\{0\}}\frac{\abs{\inner{(\Lambda(1+A\chi_{D_2})-\Lambda(1))f,f}}}{\norm{f}_{L^2(\partial\Omega)}^2}  \\
			&= \norm{\Lambda(1+A\chi_{D_2})-\Lambda(1)}.
		\end{align*}
	\end{proof}

\section{Distinguishability bounds and matrix characterizations for the Neumann-to-Dirichlet map} \label{sec:appB}

In this appendix we give extensions to the distinguishability bounds as well as matrix representations in terms of the Neumann-to-Dirichlet (ND) map.

The ND map is the operator $\mathcal{R}(\gamma): \nu\cdot \gamma\nabla u\mapsto u|_{\partial\Omega}$, where $u$ is the solution to the conductivity equation subject to a Neumann boundary condition $g\in H^{-1/2}_\diamond(\partial\Omega)$
\begin{equation}
	\nabla\cdot(\gamma\nabla u) = 0 \text{ in } \Omega,\quad \nu\cdot\gamma\nabla u = g\text{ on } \partial\Omega, \quad \int_{\partial\Omega} u\, ds = 0. \label{eq:pdeneu}
\end{equation}
The latter condition in \eqref{eq:pdeneu} is a grounding of the boundary potential, and is required to uniquely solve the PDE. Thus the ND map is an operator from $H^{-1/2}_\diamond(\partial\Omega)$ to $H_\diamond^{1/2}(\partial\Omega)$, where the $\diamond$-symbol indicates distributions/functions with zero mean on $\partial\Omega$. $\mathcal{R}(\gamma)$ is the inverse of $\Lambda(\gamma)$, if $\Lambda(\gamma)$ is restricted to $H^{1/2}_\diamond(\partial\Omega)$.

Returning to the domain $\Omega\equiv \rum{D}$ it is in this paper sufficient to consider $\mathcal{R}(\gamma): L^2_\diamond(\partial\Omega)\to L^2_\diamond(\partial\Omega)$ with
\[
	L^2_\diamond(\partial\Omega) \equiv \{f \in L^2(\partial\Omega) : \inner{f,1} = 0\},
\]
for which $\mathcal{R}(\gamma)$ is compact and self-adjoint (unlike the DN map where a difference of two DN maps is required for compactness).

From the proof of Lemma \ref{dntrans} we may expect that $\mathcal{R}(\gamma_{C,R}) = \mathcal{M}_a \mathcal{R}(\gamma_{0,r}) J_a^{1/2}\mathcal{M}_a$, however we need to be slightly more careful. First of all $J_a^{1/2}\mathcal{M}_a(L_\diamond^2(\partial\rum{D})) = L^2_\diamond(\partial\rum{D})$ which follows from Proposition \ref{prop:list} where the boundary integral is preserved and that $J_a^{1/2}\mathcal{M}_a$ is an involution. However, we only have $\mathcal{M}_a(L_\diamond^2(\partial\rum{D}))\subset L^2(\partial\rum{D})$. What we end up with is an ND operator from $L_\diamond^2(\partial\rum{D})$ to $\mathcal{M}_a(L_\diamond^2(\partial\rum{D}))$, corresponding to changing the grounding condition in \eqref{eq:pdeneu} to 
\[
	\int_{\partial\rum{D}} J_a^{1/2} u|_{\partial\rum{D}}\, ds = 0.
\]
Since the PDE and Neumann condition in \eqref{eq:pdeneu} gives uniqueness up to a scalar (which is chosen by the grounding condition), we can obtain the correct operator in $\mathcal{L}(L^2_\diamond(\partial\rum{D}))$ by
\begin{equation}
	\mathcal{R}(\gamma_{C,R}) = P\mathcal{M}_a \mathcal{R}(\gamma_{0,r})J_a^{1/2}\mathcal{M}_a, \label{rtrans}
\end{equation}
and similarly
\begin{equation*}
	\mathcal{R}(\gamma_{0,r}) = P\mathcal{M}_a \mathcal{R}(\gamma_{C,R})J_a^{1/2}\mathcal{M}_a, 
\end{equation*}
where $P\equiv \ident - L$ is the orthogonal projection of $L^2(\partial\rum{D})$ onto $L^2_\diamond(\partial\rum{D})$, with 
\[
	Lf \equiv \frac{1}{2\pi}\int_{\partial\rum{D}} f\, ds, \enskip f\in L^2(\partial\rum{D}).
\]
While the change is minor, the projection is necessary for the transformed ND map $\mathcal{R}(\gamma_{C,R})$ to have any eigenvalues.

\begin{proposition} \label{lemndeig}
	For $\gamma_{0,r} \equiv 1+A\chi_{B_{0,r}}$ with $0<r<1$ and $A>-1$, the eigenfunctions of $\mathcal{R}(\gamma_{0,r})$ are $f_n(\theta)\equiv \frac{1}{\sqrt{2\pi}}e^{in\theta},\enskip n\in\rum{Z}\setminus\{0\}$. The corresponding eigenvalues are
	\begin{equation*}
		\lambda_n = \frac{2+A(1-r^{2\abs{n}})}{2+A(1+r^{2\abs{n}})}\frac{1}{\abs{n}},\enskip n\neq 0.
	\end{equation*}
	The eigenvalues for the difference operator $\mathcal{R}(\gamma_{0,r})-\mathcal{R}(1)$ are
	\begin{equation}
		\lambda_n = \frac{-2Ar^{2\abs{n}}}{2+A(1+r^{2\abs{n}})}\cdot \frac{1}{\abs{n}}, \enskip n\neq 0. \label{ndlam}
	\end{equation}
	With the numbering given in \eqref{ndlam}, then $\abs{\lambda_n}$ decays monotonically with increasing $\abs{n}$.
\end{proposition}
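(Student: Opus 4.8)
The plan is to obtain the spectrum of $\mathcal{R}(\gamma_{0,r})$ by inverting the already-known DN spectrum, to read off the difference eigenvalues by subtraction, and to establish the monotone decay by a direct ratio estimate.

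First I would record the eigenpairs of $\mathcal{R}(\gamma_{0,r})$. By definition $\mathcal{R}(\gamma_{0,r})$ is the inverse of $\Lambda(\gamma_{0,r})$ on $H^{1/2}_\diamond(\partial\rum{D})$, and Proposition~\ref{prop:eigval} diagonalizes $\Lambda(\gamma_{0,r})$ in the Fourier basis $\{f_n\}_{n\in\rum{Z}}$ with eigenvalues $\frac{2+A(1+r^{2\abs{n}})}{2+A(1-r^{2\abs{n}})}\abs{n}$. For $n\neq 0$ these DN eigenvalues are nonzero --- the denominator $2+A(1-r^{2\abs{n}})$ exceeds $1$ for every $A>-1$ and $0<r<1$, so no division by zero occurs --- and hence each $f_n$ with $n\neq 0$ is an eigenfunction of $\mathcal{R}(\gamma_{0,r})$ with the reciprocal eigenvalue. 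The mode $n=0$ drops out because the constant function has DN eigenvalue $0$ and is excluded from $L^2_\diamond(\partial\rum{D})$. Taking reciprocals gives the stated $\lambda_n = \frac{2+A(1-r^{2\abs{n}})}{2+A(1+r^{2\abs{n}})}\frac{1}{\abs{n}}$. Equivalently one could derive this from scratch by solving the transmission problem \eqref{eq:pdeneu} mode by mode exactly as in the DN case; the inversion route is shorter since Proposition~\ref{prop:eigval} is at hand.

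Next the difference eigenvalues follow by subtraction. The background $\gamma=1$ corresponds to $A=0$, so $\mathcal{R}(1)$ has eigenvalues $1/\abs{n}$, and since $\mathcal{R}(\gamma_{0,r})$ and $\mathcal{R}(1)$ share the eigenfunctions $f_n$ I would compute
\begin{equation*}
\lambda_n = \frac{2+A(1-r^{2\abs{n}})}{2+A(1+r^{2\abs{n}})}\frac{1}{\abs{n}} - \frac{1}{\abs{n}} = \frac{-2Ar^{2\abs{n}}}{2+A(1+r^{2\abs{n}})}\cdot\frac{1}{\abs{n}},
\end{equation*}
the numerators telescoping to $-2Ar^{2\abs{n}}$; this is genuinely the spectrum of $\mathcal{R}(\gamma_{0,r})-\mathcal{R}(1)$ precisely because the eigenfunctions coincide.

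Finally, for the monotone decay I would set $m\equiv\abs{n}\geq 1$, note that the denominator $2+A(1+r^{2m})$ is positive for all $A>-1$, and form
\begin{equation*}
\frac{\abs{\lambda_{m+1}}}{\abs{\lambda_m}} = r^2\,\frac{m}{m+1}\,\frac{2+A(1+r^{2m})}{2+A(1+r^{2(m+1)})}.
\end{equation*}
Showing this is less than $1$ is equivalent to $(m+1)(2+A(1+r^{2(m+1)})) - r^2m(2+A(1+r^{2m})) > 0$, and expanding the left side collapses it to $(2+A)(m(1-r^2)+1) + Ar^{2(m+1)}$. For $A\geq 0$ both summands are nonnegative and the first is strictly positive, so the claim is immediate. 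The one delicate case is $-1<A<0$, where $Ar^{2(m+1)}$ is negative; there I would use $2+A>1$, $m(1-r^2)+1\geq 2-r^2>1$, and $\abs{A}r^{2(m+1)}<\abs{A}<1$ to conclude that the positive part strictly exceeds $1$ while the negative part is strictly smaller than $1$ in magnitude, leaving a positive difference. This sign bookkeeping in the negative-contrast regime is the only point that requires genuine care; the remainder is routine algebra.
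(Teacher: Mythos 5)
Your proposal is correct. The derivation of the eigenvalues matches the paper's route: the paper simply states that they ``can be derived from Proposition~\ref{prop:eigval}'', and your explicit inversion of the DN spectrum on $L^2_\diamond(\partial\rum{D})$ (with the check that the DN eigenvalues are nonzero for $n\neq 0$ and the exclusion of the constant mode) together with the subtraction for the difference operator is exactly what is meant there, spelled out in more detail. Where you genuinely diverge is the monotonicity argument. The paper extends $\abs{n}\mapsto\lambda_n$ to the function $f(x) = \frac{-2Ar^{2x}}{2+A(1+r^{2x})}\cdot\frac{1}{x}$ of a continuous variable, computes $f'$ explicitly, and observes that the sign of $f'$ is that of $A$ while the sign of $f$ is that of $-A$; hence $f$ is of one sign and monotone towards zero in each of the cases $A>0$ and $-1<A<0$, so $\abs{f}$ decreases. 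You instead compare consecutive terms through the ratio $\abs{\lambda_{m+1}}/\abs{\lambda_m}$ and reduce the claim to the positivity of $(2+A)\bigl(m(1-r^2)+1\bigr)+Ar^{2(m+1)}$ --- I checked the algebraic collapse and it is correct --- which you then settle by the same case split on the sign of $A$. Both arguments are elementary and both must treat the negative-contrast regime separately; yours avoids differentiation entirely and is purely discrete, while the paper's yields monotonicity of the continuous interpolant as a (here unused) byproduct. Either proof is acceptable.
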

\begin{proof}
	The eigenvalues can be derived from Proposition \ref{prop:eigval}. Now define
	\begin{equation*}
		f(x) = \frac{-2Ar^{2x}}{2+A(1+r^{2x})}\cdot\frac{1}{x}, \enskip x>0.
	\end{equation*}
	It follows immediately that 
	\begin{equation*}
		f'(x) = \frac{-2Ar^{2x}(2\log(r)x(A+2)-(A+2+Ar^{2x}))}{(A+2+Ar^{2x})^2x^2}, \enskip x>0. 
	\end{equation*}
	Since $0<r<1$ and $A>-1$ then $\log(r)<0$, $A+2>0$ and $A+2+Ar^{2x}>0$. In the case $-1<A<0$ we have $f'<0$ so $f$ is a decreasing function, however $f>0$. In the case $A>0$ then $f'>0$ so $f$ is increasing, but $f<0$. Collected we get that $\abs{f}$ is decreasing.
\end{proof}

While Proposition \ref{lemndeig} seems obvious, the corresponding case for the DN-maps does not hold for all $A$ and $r$, i.e.\ the eigenvalues for the DN-map difference does not decay monotonically with the usual numbering of the eigenvalues from the trigonometric basis.

Similar to Section \ref{sec:bnds} let $f_n(\theta) \equiv \frac{1}{\sqrt{2\pi}}e^{in\theta}$. Defining $\psi_n \equiv J_a^{1/2}\mta f_n$ makes $\{\psi_n\}_{n\in\rum{Z}\setminus\{0\}}$ an orthonormal basis for $L^2_\diamond(\partial\rum{D})$ with respect to the $\inner{\cdot,\cdot}_{-1/2}$-inner product defined in \eqref{eq:mhalfnorm}. 
\begin{theorem} \label{ndmatthm}
	Let either $H(\gamma) \equiv \mathcal{R}(\gamma)$ or $H(\gamma) \equiv \mathcal{R}(\gamma) - \mathcal{R}(1)$. Let $\hat{\lambda}_n$ be the $n$'th eigenvalue of $H(\gamma_{0,r})$ (cf. Proposition \ref{lemndeig}), and denote by $h_n$ the $n$'th Fourier coefficient of $J_a^{1/2}$ given by
	\begin{equation}
		h_n = \begin{cases} 1 & n = 0,\\ \overline{a}^{\abs{n}} & n>0,\\ a^{\abs{n}} & n<0. \end{cases} \label{Jhalffourier}
	\end{equation}
	Define the orthonormal basis $\{\psi_n\}_{n\in\rum{Z}\setminus\{0\}}$ by
	\[
		\psi_n \equiv J_a^{1/2}\mathcal{M}_a f_n, \quad f_n(\theta) \equiv \frac{1}{\sqrt{2\pi}}e^{in\theta},\enskip n\in\rum{Z}\setminus\{0\}.
	\]
	Then $H(\gamma_{C,R})$ is represented in this basis via the following matrix:
	\begin{equation}
		\mathcal{A}_{n,m} \equiv \inner{H(\gamma_{C,R})\psi_m,\psi_n}_{-1/2} = \hat{\lambda}_m(h_{n-m}-\overline{h_m}h_n),\enskip m,n\neq 0. \label{ramat}
	\end{equation}
\end{theorem}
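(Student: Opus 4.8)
The plan is to mimic the proof of Theorem~\ref{dnmat} for the DN map, inserting the mean-zero projection $P$ wherever it is needed. First I would record that the transformation identity \eqref{rtrans} holds not only for $\mathcal{R}$ but also for the difference $\mathcal{R}(\gamma)-\mathcal{R}(1)$: taking $A=0$ in \eqref{rtrans} shows $\mathcal{R}(1)=P\mathcal{M}_a\mathcal{R}(1)J_a^{1/2}\mathcal{M}_a$, so subtraction gives $H(\gamma_{C,R})=P\mathcal{M}_aH(\gamma_{0,r})J_a^{1/2}\mathcal{M}_a$ for either choice of $H$. Applying this to $\psi_m=J_a^{1/2}\mathcal{M}_af_m$ and using that $J_a^{1/2}\mathcal{M}_a$ is an involution (Proposition~\ref{prop:list}(ii)) collapses the inner pair, $J_a^{1/2}\mathcal{M}_a\psi_m=f_m$; since $f_m$ is an eigenfunction of $H(\gamma_{0,r})$ with eigenvalue $\hat\lambda_m$ (Proposition~\ref{lemndeig}), I obtain $H(\gamma_{C,R})\psi_m=\hat\lambda_m\,P\mathcal{M}_af_m$.

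Next I would convert the weighted inner product in \eqref{eq:mhalfnorm} into the plain $L^2(\partial\rum{D})$ pairing. Because $\psi_n=J_a^{1/2}\mathcal{M}_af_n$ and $J_a^{1/2}$ is real and positive, the weight $J_a^{-1/2}$ cancels one factor of $J_a^{1/2}$, giving $\inner{g,\psi_n}_{-1/2}=\inner{g,\mathcal{M}_af_n}$ for any $g$. Hence $\mathcal{A}_{n,m}=\hat\lambda_m\inner{P\mathcal{M}_af_m,\mathcal{M}_af_n}$, and everything reduces to evaluating this unweighted pairing.

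To evaluate it I would first identify the Fourier coefficients of $J_a^{1/2}$. The expression \eqref{eq:jacdetbdry} is a Poisson kernel, with expansion $J_a^{1/2}=\sum_{k\in\rum{Z}}\rho^{\abs{k}}e^{ik(\theta-\sang)}$; matching powers of $a=\rho e^{i\sang}$ reproduces exactly the $h_k$ in \eqref{Jhalffourier}, and reality of $J_a^{1/2}$ gives $\overline{h_m}=h_{-m}$. Splitting $P=\ident-L$, the two resulting terms are computed by the boundary change of variables $y=M_a(x)$, for which the arclength element satisfies $ds_y=J_a^{1/2}(x)\,ds_x$ and, by the involution property, $J_a^{1/2}(M_a(y))=J_a^{-1/2}(y)$. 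The identity part becomes $\inner{\mathcal{M}_af_m,\mathcal{M}_af_n}=\inner{f_m,f_n}_{1/2}=h_{n-m}$, while the mean part $\inner{L\mathcal{M}_af_m,\mathcal{M}_af_n}$ reduces to the product of the two means $\tfrac{1}{2\pi}\int\mathcal{M}_af_m\,ds=\overline{h_m}/\sqrt{2\pi}$, yielding $\overline{h_m}h_n$. Subtracting gives $\inner{P\mathcal{M}_af_m,\mathcal{M}_af_n}=h_{n-m}-\overline{h_m}h_n$ and hence \eqref{ramat}.

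The main point requiring care is the role of $P$. Unlike the DN situation of Theorem~\ref{dnmat}, here $\mathcal{M}_af_n$ is generally \emph{not} mean-free — only $\psi_n$ lies in $L^2_\diamond(\partial\rum{D})$ — so the projection cannot be dropped, and it is precisely the mean term that contributes the rank-one correction $-\overline{h_m}h_n$ distinguishing \eqref{ramat} from the tridiagonal DN matrix. The remaining obstacle is purely bookkeeping: keeping the Jacobian factors and the conjugations straight through the involutive change of variables, which is what turns the identity term into the $\inner{\cdot,\cdot}_{1/2}$ pairing and isolates the Fourier coefficient $h_{n-m}$.
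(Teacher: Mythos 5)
Your proposal is correct and follows essentially the same route as the paper: the transformation identity \eqref{rtrans} applied to $\psi_m$ via the involution $J_a^{1/2}\mathcal{M}_a$, the Poisson-kernel Fourier expansion of $J_a^{1/2}$, the change-of-variables identity $\int \mathcal{M}_a f\,ds = \int J_a^{1/2}f\,ds$, and the splitting of $P=\ident-L$ into the $h_{n-m}$ term and the rank-one mean term $\overline{h_m}h_n$. The only (harmless) difference is that you strip the weight $J_a^{-1/2}$ against the factor $J_a^{1/2}$ in $\psi_n$ before splitting $P$, whereas the paper instead computes the adjoint $P^*=\ident-J_a^{1/2}LJ_a^{-1/2}$ with respect to $\inner{\cdot,\cdot}_{-1/2}$ and moves the projection across the inner product; both yield the same two terms.
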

\begin{proof}
	First the Fourier series of $J_a^{1/2}$ will be determined. Consider the case $\sang = 0$:
	\begin{equation*}
		J_\rho^{1/2}|_{e^{i\theta}} = \frac{1-\rho^2}{\abs{\rho e^{i\theta}-1}^2} = 1+\frac{\rho}{e^{-i\theta}-\rho} + \frac{\rho}{e^{i\theta}-\rho} = 1 + \sum_{n=1}^\infty\rho^ne^{in\theta}+\sum_{n=1}^\infty \rho^ne^{-in\theta},
	\end{equation*}
	where the series comes from geometric series of $\rho e^{i\theta}$ and $\rho e^{-i\theta}$, which converge as $0\leq \rho < 1$. Now $\sang\neq 0$ corresponds to a translation by $\sang$ in the $\theta$-variable:
	\begin{equation*}
		J_a^{1/2}|_{e^{i\theta}} = 1 + \sum_{n=1}^\infty\rho^ne^{in(\theta-\sang)}+\sum_{n=1}^\infty \rho^ne^{-in(\theta-\sang)} = 1 + \sum_{n=1}^\infty \overline{a}^n e^{in\theta} + \sum_{n=1}^\infty a^n e^{-in\theta},
	\end{equation*}
	which corresponds to the Fourier coefficients given in \eqref{Jhalffourier}.
	
	The adjoint of the projection operator $P$ with respect to $\inner{\cdot,\cdot}_{-1/2}$ is
	\begin{equation}
		P^* = \ident - J_a^{1/2}LJ_a^{-1/2}. \label{projadjoint}
	\end{equation}
	This follows from the calculation
	\begin{align*}
			\inner{Pf,g}_{-1/2} &= \inner{f,g}_{-1/2}-\frac{1}{2\pi}\int_{\partial \rum{D}} f\,ds\int_{\partial \rum{D}} J_a^{-1/2}\overline{g}\, ds \\
			&= \inner{f,g}_{-1/2}-\inner{f,LJ_a^{-1/2}g} \\
			&= \inner{f,(\ident-J_a^{1/2}LJ_a^{-1/2})g}_{-1/2}.
	\end{align*}

	Let $m\neq 0$, then by \eqref{rtrans} the terms of \eqref{ramat} can be expanded. Using $P^*$ from \eqref{projadjoint} and the properties in Proposition \ref{prop:list} gives
	\begin{align*}
		\mathcal{A}_{n,m} &=\inner{H(\gamma_{C,R})\psi_m,\psi_n}_{-1/2} \\
		&= \inner{P\mta H(\gamma_{0,r}) J_a^{1/2}\mta J_a^{1/2}\mta f_m,J_a^{1/2}\mta f_n}_{-1/2} \\
		&= \inner{\mta H(\gamma_{0,r}) f_m,J_a^{-1/2}(\ident-J_a^{1/2}LJ_a^{-1/2})J_a^{1/2}\mta f_n} \\
		&= \hat{\lambda}_m\inner{\mta f_m,\mta f_n} - \hat{\lambda}_m\inner{\mta f_m,L\mta f_n},
	\end{align*}
	where in the last equality it was used that $f_m$ is an eigenfunction of $H(\gamma_{0,r})$. Note that $\mta L f = L f$ as it is constant, and 
	\begin{equation*}
		L\mta f = \tfrac{1}{2\pi}\inner{\mta f,1} = \tfrac{1}{2\pi}\inner{J_a^{1/2}f,1} = LJ_a^{1/2} f.
	\end{equation*}
	Thus for $h_n = \frac{1}{\sqrt{2\pi}}\inner{J_a^{1/2},f_n} = \frac{1}{2\pi} \int_0^{2\pi}J_a^{1/2}e^{-in\theta}\,d\theta$ being the $n$'th Fourier coefficient of $J_a^{1/2}$, then
	\begin{align*}
		\mathcal{A}_{n,m} &= \frac{\hat{\lambda}_m}{\sqrt{2\pi}}\inner{J_a^{1/2} ,f_{n-m}} - \hat{\lambda}_m\inner{J_a^{1/2}f_m,LJ_a^{1/2} f_n} \\
		&= \hat{\lambda}_m h_{n-m} - \hat{\lambda}_m\inner{J_a^{1/2}f_m,1}\overline{LJ_a^{1/2}f_n} \\
		&= \hat{\lambda}_m h_{n-m} - \hat{\lambda}_m\overline{\inner{J_a^{1/2},f_m}}\frac{1}{2\pi}\inner{J_a^{1/2},f_n} \\
		&= \hat{\lambda}_m(h_{n-m}-\overline{h_m}h_n), \enskip m\neq 0. 
	\end{align*}
	Thereby concluding the proof.
\end{proof}

\begin{remark}
	The ND map can also be considered on all of $L^2(\partial\rum{D})$ by introducing the null-space $\mspan\{1\}$ such that $\mathcal{A}$ is a matrix representation of $H(\gamma_{C,R})P$ instead of $H(\gamma_{C,R})$. In that case the row $n=0$ and column $m=0$, respectively, becomes
	\begin{align*}
		\mathcal{A}_{0,m} &= \inner{H(\gamma_{C,R})P\psi_m,\psi_0}_{-1/2} = 0, \\
		\mathcal{A}_{n,0} &= \inner{H(\gamma_{C,R})P\psi_0,\psi_n}_{-1/2} = -\sum_{k\neq 0}h_k \mathcal{A}_{n,k}.
	\end{align*}
\end{remark}

Now we obtain distinguishability bounds analogous to Theorem \ref{dnbounds}.
\begin{theorem} \label{ndbounds}
	Let $\gamma$ be either $\gamma_{0,r}$ or $\gamma_{C,R}$ and denote by $\norm{\cdot}$ the operator norm on $\mathcal{L}(L^2_\diamond(\partial\rum{D}))$. From the weighted norms in \eqref{eq:halfnorm} and \eqref{eq:mhalfnorm} we have
	\begin{equation}
		\norm{\mathcal{R}(\gamma)-\mathcal{R}(1)} = \sup_{g\in L^2_\diamond(\partial\rum{D})\setminus\{0\}}\frac{\norm{(\ident-LJ_a^{1/2})(\mathcal{R}(\mta\gamma)-\mathcal{R}(1))g}_{1/2}}{\norm{g}_{-1/2}}. \label{ndnorms}
	\end{equation}
	Furthermore the following bounds hold: 
	\begin{equation}
		\frac{1-\rho}{1+\rho}\norm{\mathcal{R}(\gamma_{C,R}) - \mathcal{R}(1)} \leq \norm{\mathcal{R}(\gamma_{0,r}) - \mathcal{R}(1)} \leq \frac{\sqrt{1+\rho^2}}{1-\rho^2}\norm{\mathcal{R}(\gamma_{C,R}) - \mathcal{R}(1)}. \label{eq:ndbnds}
	\end{equation}
\end{theorem}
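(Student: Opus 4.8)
The plan is to follow the proof of Theorem~\ref{dnbounds} almost verbatim, using the ND conjugation identity \eqref{rtrans} in place of Lemma~\ref{dntrans}, the only genuinely new feature being the projection $P$.

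\emph{Variational identity \eqref{ndnorms}.} Since $\mta\gamma_{0,r}=\gamma_{C,R}$, $\mta\gamma_{C,R}=\gamma_{0,r}$ and $\mta 1=1$, the identity \eqref{rtrans} reads $\mathcal{R}(\gamma)-\mathcal{R}(1)=P\mta(\mathcal{R}(\mta\gamma)-\mathcal{R}(1))J_a^{1/2}\mta$ on $L_\diamond^2(\partial\rum{D})$ for $\gamma\in\{\gamma_{0,r},\gamma_{C,R}\}$. I would write the operator norm as the supremum of $\norm{(\mathcal{R}(\gamma)-\mathcal{R}(1))f}/\norm{f}$ over $f\in L_\diamond^2(\partial\rum{D})\setminus\{0\}$, insert this factorization, and substitute $g\equiv J_a^{1/2}\mta f$. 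By Proposition~\ref{prop:list}(ii) the map $J_a^{1/2}\mta$ is an involution carrying $L_\diamond^2(\partial\rum{D})$ bijectively onto itself, so $f=J_a^{1/2}\mta g$ and the supremum may equally be taken over $g$. Two short computations then finish this part. For the denominator, the change of variables on $\partial\rum{D}$ (using that $J_a^{1/2}$ is the boundary Jacobian of $M_a$ and the involution relation $J_a^{-1/2}(x)=J_a^{1/2}(M_a(x))$) turns $\norm{J_a^{1/2}\mta g}^2$ into $\norm{g}_{-1/2}^2$. For the numerator, writing $w\equiv(\mathcal{R}(\mta\gamma)-\mathcal{R}(1))g$ and expanding the orthogonal projection $P=\ident-L$ gives $\norm{P\mta w}^2=\norm{\mta w}^2-2\pi\abs{L\mta w}^2$; the same change of variables sends $\norm{\mta w}^2$ to $\norm{w}_{1/2}^2$ and $L\mta w$ to $L(J_a^{1/2}w)$, and the elementary identity $\norm{(\ident-LJ_a^{1/2})w}_{1/2}^2=\norm{w}_{1/2}^2-2\pi\abs{L(J_a^{1/2}w)}^2$ (obtained by expanding $\norm{w-c}_{1/2}^2$ with $c=L(J_a^{1/2}w)$ and using $\int_{\partial\rum{D}}J_a^{1/2}\,ds=2\pi$) matches the two, yielding \eqref{ndnorms}.

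\emph{Upper bound.} I would apply \eqref{ndnorms} with $\gamma=\gamma_{C,R}$, so that $\mathcal{R}(\mta\gamma)-\mathcal{R}(1)=\mathcal{R}(\gamma_{0,r})-\mathcal{R}(1)$, and test the supremum with the concentric eigenfunction $\hat g_1=f_1$ realising the largest magnitude eigenvalue $\hat\lambda_1$ from Proposition~\ref{lemndeig}; this is a single Fourier mode of modulus $(2\pi)^{-1/2}$, so the projection term is explicitly computable. Identifying $c=L(J_a^{1/2}\hat g_1)$ through the Fourier coefficient $h_{-1}=a$ of $J_a^{1/2}$ in \eqref{Jhalffourier} gives $\norm{(\ident-LJ_a^{1/2})\hat g_1}_{1/2}^2=1-\rho^2$, while \eqref{Jmhalfint} gives $\norm{\hat g_1}_{-1/2}^2=(1+\rho^2)/(1-\rho^2)$. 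Hence $\norm{\mathcal{R}(\gamma_{C,R})-\mathcal{R}(1)}\ge\abs{\hat\lambda_1}(1-\rho^2)/\sqrt{1+\rho^2}$, and since $\abs{\hat\lambda_1}=\norm{\mathcal{R}(\gamma_{0,r})-\mathcal{R}(1)}$ this rearranges to the upper bound in \eqref{eq:ndbnds}.

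\emph{Lower bound and main obstacle.} The clean DN argument cannot be copied here, because for the lower bound the test function must be the non-concentric eigenfunction, which is unknown, so the projection $\ident-LJ_a^{1/2}$ cannot be evaluated exactly; this is the crux. I would instead settle for a cruder estimate: starting again from \eqref{ndnorms} with $\gamma=\gamma_{C,R}$, discard the projection using that $\ident-LJ_a^{1/2}$ is an orthogonal projection, hence a contraction, in $\inner{\cdot,\cdot}_{1/2}$ (this follows from $(LJ_a^{1/2})^2=LJ_a^{1/2}$ together with the self-adjointness of $LJ_a^{1/2}$ in $\inner{\cdot,\cdot}_{1/2}$). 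This gives $\norm{\mathcal{R}(\gamma_{C,R})-\mathcal{R}(1)}\le\sup_g\norm{(\mathcal{R}(\gamma_{0,r})-\mathcal{R}(1))g}_{1/2}/\norm{g}_{-1/2}$, after which the norm equivalence \eqref{eq:normeqv}, applied to numerator and denominator, bounds the right-hand side by $\frac{1+\rho}{1-\rho}\norm{\mathcal{R}(\gamma_{0,r})-\mathcal{R}(1)}$. Rearranging yields the lower bound in \eqref{eq:ndbnds}; the norm lost in discarding the projection is precisely what makes this bound less sharp than its DN counterpart.
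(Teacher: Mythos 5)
Your proposal is correct, and the variational identity and the upper bound follow the paper's proof essentially verbatim: the same substitution $g=J_a^{1/2}\mta f$, the same bookkeeping of the projection $P$ under the change of variables (the paper gets there via the operator identity $P\mta=\mta-LJ_a^{1/2}$ and $\mta L=L$ rather than your Pythagorean expansion, but these are the same computation), and the same test function $e^{i\theta}$ with $L(J_a^{1/2}\hat g_1)$ evaluated through the Fourier coefficient $h_{-1}=a$, yielding the identical constant $(1-\rho^2)^2/(1+\rho^2)$ for the squared ratio.

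The lower bound is where you genuinely depart from the paper, and your route is valid. The paper applies \eqref{ndnorms} with $\gamma=\gamma_{0,r}$, tests with the (unknown) normalized eigenfunction $g_1$ of $\mathcal{R}(\gamma_{C,R})-\mathcal{R}(1)$, invokes the norm equivalence \eqref{eq:normeqv}, and then exploits that $g_1\in L^2_\diamond(\partial\rum{D})$ forces $\inner{LJ_a^{1/2}g_1,g_1}=0$, so that $\norm{(\ident-LJ_a^{1/2})g_1}^2=\norm{g_1}^2+\norm{LJ_a^{1/2}g_1}^2\geq 1$ --- i.e.\ the projection term \emph{helps} rather than hurts, and the paper records the intermediate refinement $\abs{\hat\lambda_1}^2\geq\abs{\lambda_1}^2(\tfrac{1-\rho}{1+\rho})^2(1+2\pi\abs{LJ_a^{1/2}g_1}^2)$ for later discussion. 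You instead apply \eqref{ndnorms} with $\gamma=\gamma_{C,R}$ and discard $\ident-LJ_a^{1/2}$ outright as a $\inner{\cdot,\cdot}_{1/2}$-orthogonal projection (your verification of idempotency and $\inner{\cdot,\cdot}_{1/2}$-self-adjointness of $LJ_a^{1/2}$ is correct), then apply \eqref{eq:normeqv} to the whole supremum. This avoids any reference to the non-concentric eigenfunction and is arguably cleaner, and it lands on the same constant $\tfrac{1-\rho}{1+\rho}$; what it gives up is the sharper $g_1$-dependent intermediate bound that the paper uses afterwards to discuss why the numerically observed inequality $\norm{\mathcal{R}(\gamma_{C,R})-\mathcal{R}(1)}\leq\norm{\mathcal{R}(\gamma_{0,r})-\mathcal{R}(1)}$ is not recovered analytically.
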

\begin{proof}
By \eqref{rtrans}
\begin{align*}
	\norm{\mathcal{R}(\gamma)-\mathcal{R}(1)}^2 &= \sup_{g\in L^2_\diamond(\partial\rum{D})\setminus\{0\}} \frac{\norm{P\mta (\mathcal{R}(\mathcal{M}_a\gamma)-\mathcal{R}(1))J_a^{1/2}\mta g}^2}{\norm{g}^2} \\
	&= \sup_{g\in L^2_\diamond(\partial\rum{D})\setminus\{0\}}\frac{\int_{\partial\rum{D}} \abs{P\mta (\mathcal{R}(\mathcal{M}_a\gamma)-\mathcal{R}(1))J_a^{1/2}\mta g}^2\,ds}{\int_{\partial\rum{D}}J_a^{1/2}\abs{\mta g}^2\,ds}.
\end{align*}
Utilizing that $J_a^{1/2}\mta(L^2_\diamond(\partial\rum{D})) = L^2_\diamond(\partial\rum{D})$, we can substitute $J_a^{1/2}\mta g$ with $g$, and afterwards use that $P\mta = \mta-LJ_a^{1/2}$
\begin{align*}
	\norm{\mathcal{R}(\gamma)-\mathcal{R}(1)}^2 &= \sup_{g\in L^2_\diamond(\partial\rum{D})\setminus\{0\}}\frac{\int_{\partial\rum{D}} \abs{P\mta(\mathcal{R}(\mathcal{M}_a\gamma)-\mathcal{R}(1))g}^2\,ds}{\int_{\partial\rum{D}}J_a^{-1/2}\abs{g}^2\,ds} \\
	&= \sup_{g\in L^2_\diamond(\partial\rum{D})\setminus\{0\}}\frac{\int_{\partial\rum{D}} \abs{(\mta-LJ_a^{1/2})(\mathcal{R}(\mathcal{M}_a\gamma)-\mathcal{R}(1))g}^2\,ds}{\int_{\partial\rum{D}}J_a^{-1/2}\abs{g}^2\,ds}.
\end{align*} 
Applying the change of variables $\mta$ and $\mta L = L$ yields the expression in \eqref{ndnorms}
\begin{align*}
	\norm{\mathcal{R}(\gamma)-\mathcal{R}(1)}^2 &= \sup_{g\in L^2_\diamond(\partial\rum{D})\setminus\{0\}}\frac{\int_{\partial\rum{D}} J_a^{1/2}\abs{(\ident-LJ_a^{1/2})(\mathcal{R}(\mathcal{M}_a\gamma)-\mathcal{R}(1))g}^2\,ds}{\int_{\partial\rum{D}}J_a^{-1/2}\abs{g}^2\,ds} \\
		&= \sup_{g\in L^2_\diamond(\partial\rum{D})\setminus\{0\}}\frac{\norm{(\ident-LJ_a^{1/2})(\mathcal{R}(\mathcal{M}_a\gamma)-\mathcal{R}(1))g}_{1/2}^2}{\norm{g}_{-1/2}^2}.
\end{align*}
Now let $\hat{g}_1 \equiv e^{i\theta}$ which by Proposition \ref{lemndeig} is the eigenfunction corresponding to the largest eigenvalue $\hat{\lambda}_1$ for $\mathcal{R}(\gamma_{0,r})-\mathcal{R}(1)$. Let $\lambda_1$ be the largest eigenvalue for $\mathcal{R}(\gamma_{C,R})-\mathcal{R}(1)$, then \eqref{ndnorms} implies
\begin{align}
	\abs{\lambda_1}^2 &= \norm{\mathcal{R}(\gamma_{C,R})-\mathcal{R}(1)}^2 \label{ndbndmideq}\\
	&= \sup_{g\in L^2_\diamond(\partial\rum{D})\setminus\{0\}}\frac{\int_{\partial\rum{D}} J_a^{1/2}\abs{(\ident-LJ_a^{1/2})(\mathcal{R}(\gamma_{0,r})-\mathcal{R}(1))g}^2\,ds}{\int_{\partial\rum{D}}J_a^{-1/2}\abs{g}^2\,ds} \notag \\
	& \geq \abs{\hat{\lambda}_1}^2\frac{\int_{\partial\rum{D}} J_a^{1/2}\abs{(\ident-LJ_a^{1/2})\hat{g}_1}^2\,ds}{\int_{\partial\rum{D}}J_a^{-1/2}\,ds} \notag\\
	&= \frac{\abs{\hat{\lambda}_1}^2}{2\pi}\frac{1-\rho^2}{1+\rho^2}\int_{\partial\rum{D}} J_a^{1/2}\abs{(\ident-LJ_a^{1/2})\hat{g}_1}^2\,ds, \label{eq:ndbnd1}
\end{align}
where the integral of $J_a^{-1/2}$ was calculated in \eqref{Jmhalfint}. Expanding $J_a^{1/2}$ in its Fourier series from \eqref{Jhalffourier} gives $J_a^{1/2}\hat{g}_1 = \sum_{k\in\rum{Z}} h_k e^{i(k+1)\theta}$, thus
\begin{equation}
	LJ_a^{1/2}\hat{g}_1 = \frac{1}{2\pi}\int_{\partial\rum{D}} J_a^{1/2}\hat{g}_1\,ds = \frac{1}{2\pi}\sum_{k\in\rum{Z}}h_k\int_{0}^{2\pi} e^{i(k+1)\theta}\,d\theta = h_{-1} = a. \label{eq:ndbnd2}
\end{equation}	
By inserting \eqref{eq:ndbnd2} into \eqref{eq:ndbnd1}, again applying the Fourier series of $J_a^{1/2}$ from \eqref{Jhalffourier} and that $\int_{\partial\rum{D}} J_a^{1/2}\,ds = 2\pi$ gives the upper bound
\begin{align*}
	\abs{\lambda_1}^2 &\geq \frac{\abs{\hat{\lambda}_1}^2}{2\pi}\frac{1-\rho^2}{1+\rho^2}\int_{0}^{2\pi} J_a^{1/2}\abs{e^{i\theta}-a}^2\,d\theta \\
	&= \frac{\abs{\hat{\lambda}_1}^2}{2\pi}\frac{1-\rho^2}{1+\rho^2}\int_{0}^{2\pi} J_a^{1/2}(1+\rho^2-\overline{a}e^{i\theta}-ae^{-i\theta})\,d\theta \\
	&=  \frac{\abs{\hat{\lambda}_1}^2}{2\pi}\frac{1-\rho^2}{1+\rho^2} \left[2\pi(1+\rho^2)-\int_{0}^{2\pi}\left(\overline{a}\sum_{k\in\rum{Z}}h_ke^{i(k+1)\theta} + a\sum_{k\in\rum{Z}}h_ke^{i(k-1)\theta}\right)\,d\theta\right] \\
	&= \frac{\abs{\hat{\lambda}_1}^2}{2\pi}\frac{1-\rho^2}{1+\rho^2}\left[2\pi(1+\rho^2)-2\pi(\overline{a}h_{-1}+ah_1)\right] \\
	&= \abs{\hat{\lambda}_1}^2\frac{(1-\rho^2)^2}{1+\rho^2}.
\end{align*}
Thus
\begin{equation*}
	\norm{\mathcal{R}(\gamma_{0,r})-\mathcal{R}(1)} = \abs{\hat{\lambda}_1} \leq \frac{\sqrt{1+\rho^2}}{1-\rho^2}\abs{\lambda_1} = \frac{\sqrt{1+\rho^2}}{1-\rho^2}\norm{\mathcal{R}(\gamma_{C,R})-\mathcal{R}(1)}.
\end{equation*}
Now consider the opposite case for \eqref{ndbndmideq}, and let $g_1$ be a normalized (in $\norm{\cdot}_{L^2(\partial\rum{D})}$) eigenfunction corresponding to the largest eigenvalue $\lambda_1$ of $\mathcal{R}(\gamma_{C,R})-\mathcal{R}(1)$. Using the bounds \eqref{eq:normeqv}
\begin{align*}
\abs{\hat{\lambda}_1}^2 &= \norm{\mathcal{R}(\gamma_{0,r})-\mathcal{R}(1)}^2 \\
&= \sup_{g\in L^2_\diamond(\partial\rum{D})\setminus\{0\}}\frac{\norm{ (\ident-LJ_a^{1/2})(\mathcal{R}(\gamma_{C,R})-\mathcal{R}(1))g}^2_{1/2}}{\norm{g}^2_{-1/2}}  \\
&\geq \abs{\lambda_1}^2\frac{\norm{(\ident-LJ_a^{1/2})g_1}_{1/2}^2}{\norm{g_1}_{-1/2}^2}\\
&\geq \abs{\lambda_1}^2\left(\tfrac{1-\rho}{1+\rho}\right)^2\norm{(\ident-LJ_a^{1/2})g_1}^2.
\end{align*}
Now utilizing that $g_1\in L_\diamond^2(\partial\rum{D})$, so as $LJ_a^{1/2}g_1$ is constant then $\inner{LJ_a^{1/2}g_1,g_1} = 0$:
\begin{align}
	\abs{\hat{\lambda}_1}^2 &\geq \abs{\lambda_1}^2\left(\tfrac{1-\rho}{1+\rho}\right)^2(\norm{g_1}^2 +\norm{LJ_a^{1/2}g_1}^2) \notag\\
	&= \abs{\lambda_1}^2\left(\tfrac{1-\rho}{1+\rho}\right)^2(1 +2\pi\abs{LJ_a^{1/2}g_1}^2) \label{betterlowbnd} \\
	&\geq \abs{\lambda_1}^2\left(\tfrac{1-\rho}{1+\rho}\right)^2, \notag
\end{align} 
which gives the lower bound in \eqref{eq:ndbnds}.
\end{proof}

Numerically it can be verified (cf. Figure \ref{fig:fig6a}) that 
\begin{equation*}
	\norm{\mathcal{R}(\gamma_{C,R})-\mathcal{R}(1)} \leq \norm{\mathcal{R}(\gamma_{0,r})-\mathcal{R}(1)},
\end{equation*}
which is a stronger bound than in Theorem~\ref{ndbounds}. However, in the proof even the bound \eqref{betterlowbnd} which depends on $g_1$ does not give $\norm{\mathcal{R}(\gamma_{C,R})-\mathcal{R}(1)} \leq \norm{\mathcal{R}(\gamma_{0,r})-\mathcal{R}(1)}$ in general. 

\begin{remark}
	It is possible to remove the projection operator $P$ in Theorem \ref{ndbounds}, which led to its lengthy proof, by writing the norm as
	\begin{equation*}
		\norm{\mathcal{R}(\gamma)-\mathcal{R}(1)} = \sup_{g\in L^2_\diamond(\partial\rum{D})\setminus\{0\}}\frac{\abs{\inner{P\mathcal{M}_a(\mathcal{R}(\mta\gamma)-\mathcal{R}(1))J_a^{1/2}\mta g,g}}}{\norm{g}^2},
	\end{equation*}
	and abusing that $P$ is self-adjoint in the usual $L^2(\partial\rum{D})$-inner product (as it is an orthogonal projection). The proof would give the same lower bound, however it leads to the worse upper bound with the term $(1+\rho^2)/(1-\rho^2)$ instead of $\sqrt{1+\rho^2}/(1-\rho^2)$.
\end{remark}

\begin{figure}[htb]
\centering
\subfloat[\label{fig:fig6a}]{\includegraphics[width=.49\textwidth]{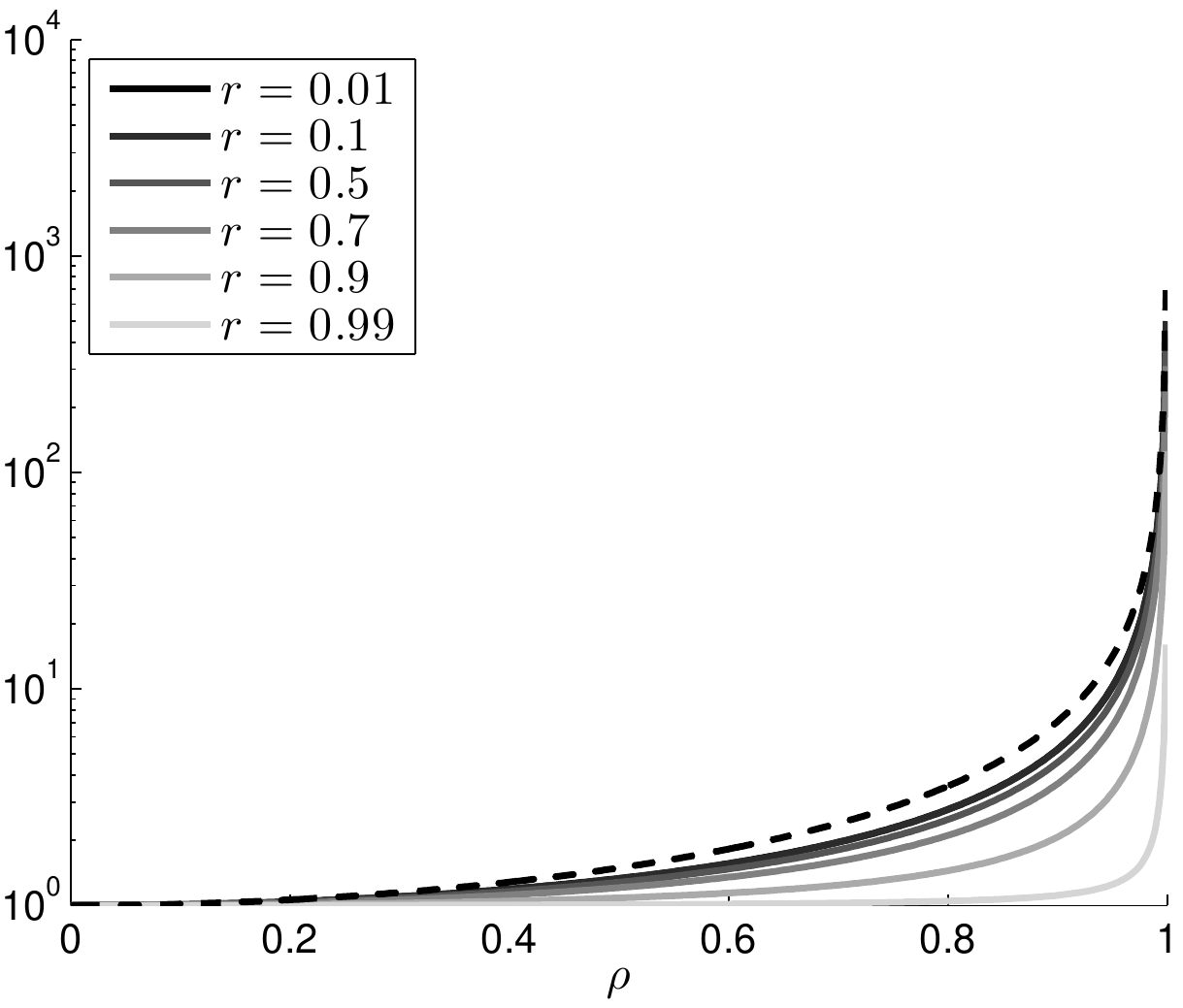}}
\hfill
\subfloat[\label{fig:fig6b}]{\includegraphics[width=.49\textwidth]{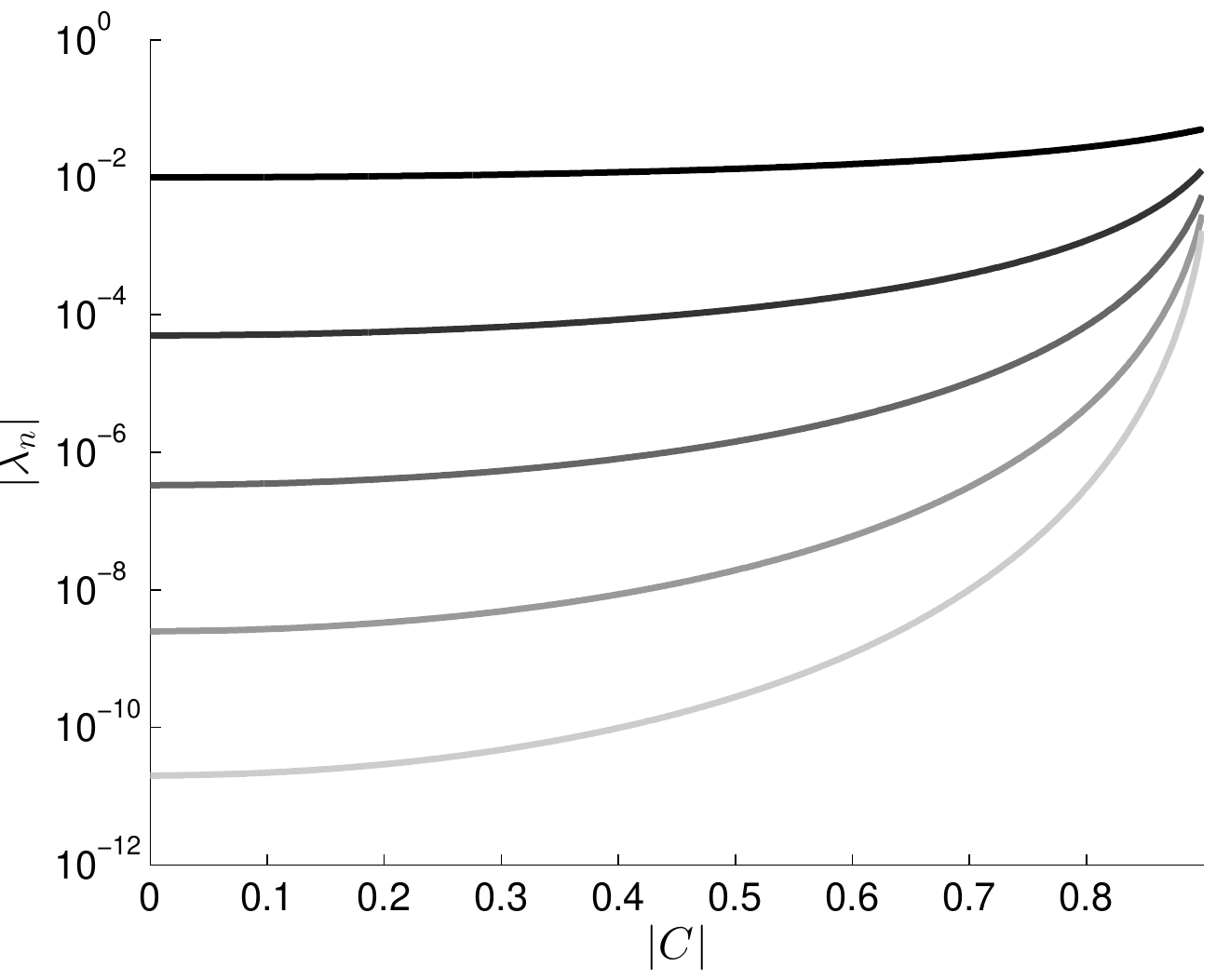}} 
\caption{\textbf{(a):} Ratio $\norm{\mathcal{R}(\gamma_{0,r})-\mathcal{R}(1)}/\norm{\mathcal{R}(\gamma_{C,R})-\mathcal{R}(1)}$ for $\abs{a} = \rho\in[0,1)$ where $R$ and $C$ are determined from $r$ and $\rho$ by Proposition \ref{prop:mobdisk}, along with the upper bound (dashed line) from Theorem \ref{ndbounds}. \textbf{(b):} 10 largest eigenvalues $\lambda_n$ (each with multiplicity 2) of $\mathcal{R}(\gamma_{C,R})-\mathcal{R}(1)$ with fixed $R = 0.1$ and $0\leq \abs{C}< 1-R$.}
\label{fig:fig6}
\end{figure}

Figure \ref{fig:fig6a} shows that the upper bound in Theorem \ref{ndbounds} is very reasonable for small inclusions with $r$ close to $0$. Furthermore, it shows (for the chosen examples) that the distinguishability is decreasing as $\rho$ is increased, meaning $\norm{\mathcal{R}(\gamma_{C,R})-\mathcal{R}(1)} \leq \norm{\mathcal{R}(\gamma_{0,r})-\mathcal{R}(1)}$. This is different from what was observed for the DN map in Figure~\ref{fig:fig4}, however it is worth noting that the radius $R$ is decreasing with $\rho$, and in Figure~\ref{fig:fig6b} where the radius is kept fixed, the distinguishability is increasing. Thus, for the ND map the distinguishability is increasing at a slower rate as the distance to the boundary is reduced (compared to the DN map), and is not able to overcome the change in radius from $r$ to $R$. It is therefore worth noting that reconstruction based on ND- and DN-maps are fundamentally different in terms of depth dependence. 

\bibliographystyle{siam}
\bibliography{minbib}

\end{document}